\documentclass[a4paper]{amsart}

\usepackage[notcite,notref]{showkeys}

\setlength{\textwidth}{16cm}
\setlength{\textheight}{23cm}
\setlength{\oddsidemargin}{0pt}
\setlength{\evensidemargin}{0pt}

\usepackage{amsthm}
\usepackage{amsmath,amsfonts,amssymb}
\usepackage{graphics,color}
\usepackage{mathrsfs}
\usepackage[bookmarksnumbered, bookmarksopen, colorlinks, citecolor=blue, linkcolor=blue]{hyperref}
\usepackage{color}

\theoremstyle{plain}
\newtheorem{theorem}{Theorem}[section]
\newtheorem{lemma}[theorem]{Lemma}

\newtheorem{corollary}[theorem]{Corollary}

\theoremstyle{remark}
\newtheorem{definition}[theorem]{Definition\rm}
\newtheorem{remark}[theorem]{Remark}

\numberwithin{equation}{section}

\newcommand*{\R}{\ensuremath{\mathbb{R}}}

\newcommand*{\N}{\ensuremath{\mathbb{N}}}

\newcommand*{\T}{\ensuremath{\mathscr{T}}}

\newcommand*{\Q}{\ensuremath{\mathcal{Q}}}
\renewcommand*{\O}{\ensuremath{\mathcal{O}}}
\renewcommand*{\L}{\ensuremath{\mathcal{L}}}
\newcommand*{\I}{\ensuremath{\mathcal{I}}}
\newcommand*{\D}{\ensuremath{\mathcal{D}}}

\begin{document}

\title[SBV-like regularity for general systems]{SBV-like regularity for  general hyperbolic systems of conservation laws}

\author{Stefano Bianchini}
\address{SISSA, via Bonomea 265, 34136 Trieste, ITALY}
\email{bianchin@sissa.it}
\urladdr{http://people.sissa.it/{\raise.17ex\hbox{$\scriptstyle\sim$}}bianchin}

\author{Lei Yu}
\address{SISSA, via Bonomea 265, 34136 Trieste, ITALY}
\email{yulei@sissa.it}

\thanks{The authors thank Laura Caravenna for his kindly help and her suggestions}

\begin{abstract}
We prove the SBV regularity of the characteristic speed of the scalar hyperbolic conservation law and SBV-like regularity of the eigenvalue functions of the Jacobian matrix of flux function for general systems of conservation laws.

More precisely,  for the equation
\begin{equation*} 
u_t + f(u)_x = 0, \quad u : \R^+ \times \R \to \Omega \subset \R^N, 
\end{equation*}
we only assume the flux $f$ is $C^2$ function in the scalar case ($N=1$) and Jacobian matrix $Df$ has distinct real eigenvalues in the system case $(N\geq 2)$. Using the modification of the main decay estimate in \cite{Lau} and localization method applied in \cite{R}, we show that for the scalar equation $f'(u)$ belongs to SBV, and for system of conservation laws the scalar measure
\[
\big( D_u \lambda_i(u) \cdot r_i(u) \big) \big( l_i(u) \cdot u_x \big)
\]
has no Cantor part, where $\lambda_i$, $r_i$, $l_i$ are the $i$-th eigenvalue, $i$-th right eigenvector and $i$-th left eigenvector of the matrix $Df$.
\end{abstract}

\maketitle

\section{Introduction}

The study of the regularity of solutions to a general system of hyperbolic system of conservation laws
\begin{equation}
\label{e:basic}
u_t + f(u)_x = 0, \quad u : \R^+\times \R \to \Omega \subset \R^N
\end{equation}
with initial data
\begin{equation}
\label{e:initial}
u(t=0) = u_0 \in \mathrm{BV}(\R,\Omega)
\end{equation}
is an important topic in the study of hyperbolic equations. In particular, recently there have been interesting advances in the analysis of the structure of the measure derivative $D_x u(t)$ of a BV solution to genuinely nonlinear scalar equations and hyperbolic systems. The results obtained are that, in addition to the BV bounds, the solution enjoys the strong regularity property that no Cantor part in the space derivative of $u(t)$ appears out of a countable set of times \cite{AD,Lau,R}: the fact that the measure $D_x u(t)$ has only absolutely continuous and jump part yields by definition that $u(t) \in \mathrm{SBV}$.

The main idea of the proof is to find a bounded functional, which is monotonically decreasing in time: then one shows that at each time a Cantor part appears the functional has a jump downward, and hence one concludes that the SBV regularity of $u$ outside a countable set of times.

This paper concerns the extension of the results of \cite{Lau} to the case where the system is only strictly hyperbolic, i.e. no assumption on the nonlinear structure of the eigenvalues $\lambda_i$ of $Df$ is done. Clearly, by just considering a linearly degenerate eigenvalue, it is fairly easy to see that the solution $u$ itself cannot be in SBV, so the regularity concerns some nonlinear function of $u$.

We state the main theorem of this paper: in the following a BV function on $\R$ will be considered defined everywhere by taking the right continuous derivative.

\begin{theorem}\label{t2}
Let $u$ be a vanishing viscosity solution of the Cauchy problem for the strictly hyperbolic system \eqref{e:basic3} with small {\rm BV} norm. Then there exists an at most countable set $S \subset \R^+$ such that the measure
\[
\big( D_u \lambda_i(u) \cdot r_i(u) \big) \big( l_i(u) \cdot u_x \big)
\]
has no Cantor part for every $t \in \R^+ \setminus S$ and $i \in \{1,\ 2,\ \dots,\ N\}$.
\end{theorem}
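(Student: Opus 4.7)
\medskip
\noindent\textbf{Proof plan.}

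The strategy is the one outlined in the introduction: build, for each $i\in\{1,\dots,N\}$, a bounded and monotone (nonincreasing) functional $t\mapsto \Phi_i(t)$ whose jump at a time $\tau$ dominates the mass of a would-be Cantor part of the measure $\mu_i := (D_u\lambda_i(u)\cdot r_i(u))\,(l_i(u)\cdot u_x)$ at time $\tau$. Once such a $\Phi_i$ is in hand, the set $S_i$ of times where $\mu_i(\tau,\cdot)$ carries a Cantor part is automatically at most countable, and $S:=\bigcup_i S_i$ is the exceptional set. Taking the union over finitely many $i$ preserves countability.

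The first step is a wave decomposition: write $u_x = \sum_i v_i\,r_i(u)$ with $v_i := l_i(u)\cdot u_x$, so that $\mu_i = (D_u\lambda_i\cdot r_i)\,v_i$. The factor $D_u\lambda_i\cdot r_i$ vanishes on the linearly degenerate part and is bounded away from zero where the $i$-th field is genuinely nonlinear, so $\mu_i$ is exactly the ``effective GNL part'' of the $i$-th wave measure. In analogy with the scalar case, $\mu_i$ can be interpreted as the distributional derivative (in $x$) of the $i$-th characteristic speed $\lambda_i(u)$ along the $i$-th field, up to harmless terms involving the transversal $v_j$ ($j\ne i$) whose contribution is controlled by the interaction functional and the small $\mathrm{BV}$ norm.

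Next, I would import the decay estimate of \cite{Lau}, suitably modified. Concretely, for the scalar variable $\lambda_i(u)$ I would consider a one-parameter family of convex (or concave) functionals of Laurençot type that are dissipated along solutions of the scalar equation formally satisfied by $\lambda_i(u)$ on the $i$-th wave. The key inequality to establish is: for every $\tau$,
\[
\Phi_i(\tau^-) - \Phi_i(\tau^+) \;\gtrsim\; \bigl|\mu_i^{\mathrm{Cantor}}(\tau,\cdot)\bigr|\bigl(\text{test region}\bigr) - C\,E_{\mathrm{int}}(\tau),
\]
where $E_{\mathrm{int}}(\tau)$ is the Glimm interaction contribution across time $\tau$, itself a countably supported atomic measure in time. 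Summing over the localizing regions gives $\Phi_i(\tau^-)-\Phi_i(\tau^+)\gtrsim |\mu_i^{\mathrm{Cantor}}(\tau)|$ modulo an error that only activates on a countable set.

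The last step is the localization argument of \cite{R}: strict hyperbolicity gives a uniform spectral gap between the $\lambda_j$'s, so around any point $(\tau,x_0)$ one can isolate a backward spacetime cone in which only the $i$-th characteristic family is relevant, and the transversal waves enter only through quadratic interaction terms bounded by the (small) total wave strength. In this cone the vanishing viscosity solution $u$ behaves, to leading order, like a solution of a scalar conservation law with flux derivative $\lambda_i$, and the scalar SBV result for $\lambda_i(u)$ established via $\Phi_i$ passes to $\mu_i$.

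The main obstacle I foresee is precisely this localization/transversality step: one must show that interactions between different characteristic families, and in particular Cantor-type behavior of transversal waves $v_j$ ($j\ne i$) that happens to sit at the same time $\tau$, cannot masquerade as Cantor mass of $\mu_i$, nor can it prevent the $i$-th functional $\Phi_i$ from detecting a genuine drop. Handling this requires the interaction estimates specific to vanishing viscosity solutions (source terms $O(|v_i||v_j|)$) and a careful definition of $\Phi_i$ robust under perturbations of size controlled by the interaction measure, so that the net balance between the dissipation of $\Phi_i$ and the interaction error still bounds the Cantor mass of $\mu_i$ from above.
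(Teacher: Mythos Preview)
Your high-level structure (monotone functional controlling Cantor mass, plus localization) matches the paper, but the localization step as you describe it is wrong, and the identified ``main obstacle'' is a red herring.

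\medskip

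\textbf{The localization.} You propose to use the spectral gap to find a backward cone ``in which only the $i$-th characteristic family is relevant'' and then reduce to a scalar law for $\lambda_i(u)$. This cannot work: all $N$ families are present in every space-time region, and the system does not decouple locally into a scalar equation for $\lambda_i$. The paper's localization is different and simpler. Outside the set $C_\tau = J_\tau\cup F_\tau$ where $F_\tau=\{x:\nabla\lambda_i(u(\tau,x))\cdot r_i(u(\tau,x))=0\}$, continuity of $u$ plus Tame Oscillation yields a forward triangle on which $|\nabla\lambda_i\cdot r_i|\geq c_0/2>0$ throughout the range of the solution. In that triangle the $i$-th field is \emph{genuinely nonlinear}, so one applies the system-level decay estimate (Theorem~\ref{t:me} / Corollary~\ref{c:sri}) directly---no scalar reduction. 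On $F_\tau$ the measure $(\nabla\lambda_i\cdot r_i)\,v_i$ vanishes trivially; on $J_\tau$ the continuous part of $v_i$ has no mass. This is exactly the scalar argument of~\cite{R}, with $f''=0$ replaced by $\nabla\lambda_i\cdot r_i=0$.

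\medskip

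\textbf{The actual obstacle.} Your worry about transversal Cantor parts of $v_j$, $j\neq i$, contaminating $\mu_i$ is misplaced: $\mu_i=(\nabla\lambda_i\cdot r_i)\,v_i$ involves only $v_i$, and the decay estimate bounds $|v_i^{\mathrm{cont}}(t)|(B)$ directly, with transversal interactions already absorbed into the finite measure $\mu_i^{\mathrm{ICJ}}$. The real work, which you gloss over with ``import the decay estimate of~\cite{Lau}, suitably modified,'' is precisely that modification: proving Theorem~\ref{t:me} when \emph{only} the $i$-th field is assumed GNL and the others are arbitrary. The paper devotes Sections~\ref{s:ftm}--\ref{s:me} to this, replacing Bressan's front tracking by Ancona--Marson's~\cite{AM}, using the generalized Glimm potential of~\cite{B2}, and checking that the $i$-th wave pattern (shock mergers, $(\epsilon^0,\epsilon^1)$-shock fronts, jump measure convergence) retains the GNL structure needed for the argument of~\cite{Lau}. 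Your plan does not indicate how to do any of this.
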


In the scalar case the above theorem can be rewritten as

\begin{theorem}
\label{t:scalar}
Suppose that $u \in \mathrm{BV}(\R^+\times\R)$ is an entropy solution of the scalar conservation law \eqref{e:basic2}. Then there exists a countable set $S \subset \R^+$ such that for every $t \in \R^+ \setminus S$ the following holds:
\begin{equation*}
f'(u(t))\in \mathrm{SBV}_{\mathrm{loc}}(\R).
\end{equation*}
\end{theorem}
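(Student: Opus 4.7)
The plan is to reduce Theorem~\ref{t:scalar} to a Burgers-type SBV analysis for the characteristic speed $v(t,x) := f'(u(t,x))$. Since $u$ is a bounded BV entropy solution of $u_t + f(u)_x = 0$ and $f \in C^2$, the Vol'pert chain rule gives $v \in \mathrm{BV}_{\mathrm{loc}}(\R^+ \times \R)$; formally $v$ satisfies the Burgers equation $v_t + (v^2/2)_x = 0$, and the Kruzhkov entropy inequality for $u$ translates into the Oleinik one-sided Lipschitz bound
\[
v(t,x+h) - v(t,x) \leq \frac{h}{t}, \qquad t>0,\ h>0,
\]
independently of any convexity of $f$, since $v$ is the speed along which information propagates. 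In particular, the positive part $(D_x v(t))^+$ is absolutely continuous for every $t>0$, so the theorem reduces to showing that the negative part $(D_x v(t))^-$ has no Cantor component for every $t \in \R^+ \setminus S$.

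Next, following the strategy sketched in the introduction and combining the main decay estimate of \cite{Lau} -- suitably modified for the present setting -- with the localization method of \cite{R}, for each bounded interval $B \subset \R$ I would construct a uniformly bounded, monotone nonincreasing functional $\Phi_B : \R^+ \to \R$ with the property that if the restriction of $D_x v(t_0)$ to $B$ carries any Cantor mass, then $\Phi_B$ undergoes a strictly positive downward jump at $t_0$. A natural candidate is a weighted version of the total mass of $(D_x v(t))^-$ inside $B$: monotonicity comes from the fact that compressions can only strengthen along Burgers characteristics, while the jump property is extracted from the dissipation that occurs when a Cantorian piece of $(D_x v(t))^-$ collides with the absolutely continuous part guaranteed by the Oleinik bound. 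Since a bounded monotone function has at most countably many jumps, only countably many times $t$ can carry Cantor mass inside $B$; a diagonal argument over an increasing sequence of intervals exhausting $\R$ produces the countable exceptional set $S$.

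The main obstacle is carrying out the jump estimate in the merely $C^2$ scalar setting, where $f$ need not be genuinely nonlinear. If $f''$ vanishes on part of the range of $u$, the standard argument of \cite{Lau} for GN systems does not apply verbatim. Two observations should resolve this: first, by the chain rule $D_x v$ is absolutely continuous with respect to $D_x u$ with density proportional to $f''(u)$, so the Cantor part of $D_x v(t)$ is automatically concentrated where $f''(u) \neq 0$, and on this set the equation governing $v$ is truly Burgers-like; second, the compression-cancellation calculation of \cite{Lau} must be redone using approximate characteristics of $v_t + v v_x = 0$ rather than those of the original equation, which is precisely the modification alluded to in the abstract. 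Once this modified estimate is in place, the remaining steps -- localization in $x$, the countable-jumps bookkeeping, and the chain-rule identification of $D_x v$ in terms of $D_x u$ -- are routine.
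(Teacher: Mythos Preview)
Your reduction to a Burgers-type analysis for $v=f'(u)$ contains a genuine gap. The one-sided bound $v(t,x+h)-v(t,x)\le h/t$ is indeed correct for arbitrary $C^2$ flux---it follows from the fact that backward generalized characteristics through continuity points are straight lines and do not cross---so $(D_xv(t))^+$ is absolutely continuous, and you are right that the Cantor part of $D_xv(t)$ vanishes on $\{f''(u(t,\cdot))=0\}$ by Volpert's rule. But the claim that $v$ solves Burgers, even weakly, is false: across a shock of $u$ the discontinuity of $v$ travels with the speed $\sigma=[f(u)]/[u]$ of the original equation, not with the Burgers Rankine--Hugoniot speed $(f'(u^{\mathrm L})+f'(u^{\mathrm R}))/2$, and the two agree only when $f$ is quadratic. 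Hence ``approximate characteristics of $v_t+vv_x=0$'' generically cross the actual shocks of $u$, and the wave-balance and compression--cancellation machinery of \cite{Lau} cannot be run on $v$ as you suggest. Your monotone functional $\Phi_B$ is never actually constructed; the proposal does not explain how it would behave at these non-Burgers jumps, and that is exactly where the argument needs substance rather than a pointer to \cite{Lau}.

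The paper avoids this obstacle by localising in space--time rather than treating $v$ globally. At any point $(t_0,x_0)$ where $u$ is continuous and $f''(u(t_0,x_0))\neq 0$, finite propagation speed and the maximum principle trap the solution, inside a forward triangle based at $(t_0,x_0)$, in a range of values on which $|f''|\ge c_0>0$; on that triangle the equation is genuinely nonlinear and the already-known SBV result for convex or concave flux (Lemma~\ref{l:g}, from \cite{R}) applies directly. One then covers the complement of $F_\tau\cup J_\tau$ by countably many such triangles, each contributing at most countably many bad times, while on $F_\tau\cup J_\tau$ Volpert's chain rule kills $D^{\mathrm c}f'(u_\tau)$ outright. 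The ``localisation method of \cite{R}'' that you cite is precisely this triangle covering in the $(t,x)$-plane, not a restriction to $\{f''\neq 0\}$ in state space; once you use it this way, the scalar theorem becomes a short corollary of the genuinely nonlinear case, with no need to rebuild the decay estimate from scratch.
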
 

Since in the genuinely nonlinear case $u \mapsto \lambda_i(u)$ is invertible along the $i$-th admissible curves $T^i_s[u]$ (see Theorem \ref{t:ec} for the definition), it follows that Theorem \ref{t:me} is an extension of the results contained in \cite{Lau} (and Theorem \ref{t:scalar} is an extension of the results contained in \cite{R} when the source is $0$). The example contained in Remark \ref{ex:nonsbv} shows that the results are sharp.

The main point of the paper is the fact that the wave-tracking approximation for the waves of a genuinely nonlinear family does not essentially differ from the wavefront approximations of genuinely nonlinear systems: in other words, the wave pattern of a genuinely nonlinear characteristic family for a (approximate) solution in a general hyperbolic system has the same structure as if all characteristic families are genuinely nonlinear. Thus the analysis carried out in \cite{Lau} holds also in this case.

The proof of the above two theorems is done as follows. To introduce the argument in the easiest setting, in Section \ref{s:scalar}, we give a proof for the SBV regularity of the characteristic speed for the general scalar conservation laws. The proof is just a slight modification of the proof of Theorem 1.1 in \cite{R}.

As one sees in the proof of Theorem \ref{t:scalar}, the main tool is to obtain the SBV regularity when only one characteristic field is genuinely nonlinear (Corollary \ref{c:sri}). By inspection, the analysis of \cite{Lau} relies on the wave-front tracking approximation of \cite{Bre}, which assumes that all characteristic fields are genuinely nonlinear or linearly degenerate. Thus we devote Sections \ref{s:gpn}, Section \ref{Ss_wavefront} to introduce the wave-front tracking approximation for general systems \cite{AM}.

The focus of Section \ref{Ss_decay} is the observation that the convergence and regularity estimates of Theorem 10.4 of \cite{Bre} still holds for the $i$-th component of $u_x$, under the only assumption that the $i$-th characteristic field is genuinely nonlinear: these estimates are needed in order to define the $i$-th $(\epsilon_1,\epsilon_0)$-shocks and to pass to the limit the estimates concerning the interaction, cancellation and jump measures. The latter is responsible for the functional controlling the SBV regularity, Theorem \ref{t:me}.

After these estimates, for completeness we repeat the proof of the decay of negative waves in Section \ref{ss_negdec}. Finally we show how to adapt the strategy of the scalar case in Section \ref{s:sii}.

\section{The scalar case}
\label{s:scalar}

In this section, we restrict our attention to the scalar conservation laws and motivate our general strategy with this comparatively simpler situation. Let us consider the entropy solution to the hyperbolic conservation law in one space dimension
\begin{equation}\label{e:basic2}
\begin{cases}
u_t+f(u)_x=0 & u:\R^+\times\R\to\Omega\subset\R,\ f\in C^2(\Omega,\R), \crcr
u_{|t=0}=u_0 & u_0\in \mathrm{BV}(\R,\Omega).
\end{cases}
\end{equation}

It is easy to generalize the SBV regularity result from the convex flux case to the concave case in the following sense.

\begin{lemma}\cite{R}
\label{l:g}
Suppose $f\in C^2(\R)$ and $|f''(u)|>0$. Let $u\in L^\infty(\R)$ be an entropy solution of the scalar conservation law \eqref{e:basic2}. Then exists a countable set $S \subset \R$ such that for every $\tau\in\R^+\setminus S$ the following holds:
\begin{equation*}
u(\tau,\cdot)\in \mathrm{SBV}_{\mathrm{loc}}(\R).
\end{equation*}
\end{lemma}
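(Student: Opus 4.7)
The plan is to reduce the lemma to the classical SBV regularity result of Ambrosio--De Lellis for convex scalar laws. Since $f \in C^2$ and $f''$ never vanishes, continuity forces $f''$ to have a constant sign on the (connected) state space $\Omega$. In the case $f'' > 0$ we are already in the convex setting. If $f'' < 0$, setting $v(t,x) := u(t,-x)$ gives an entropy solution of $v_t + g(v)_x = 0$ with $g := -f$ satisfying $g'' > 0$: the reflection $x \mapsto -x$ sends any entropy--entropy flux pair $(\eta,q)$ of $f$ to the pair $(\eta,-q)$ for $g$, so the Kruzhkov inequality is preserved. Since SBV is invariant under reflection, it suffices to treat the strictly convex case.

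In that case, I would proceed as follows. Fix a bounded interval $K \subset \R$ and a strictly convex entropy $\eta$ (for instance $\eta(u) = u^2/2$) with entropy flux $q$ defined by $q' = \eta' f'$. The Lax entropy inequality yields a nonnegative locally finite Radon measure
\[
\mu := -\partial_t \eta(u) - \partial_x q(u)
\]
on $\R^+ \times \R$. Then $F(t) := \mu([0,t] \times K)$ is bounded and monotone nondecreasing, hence has at most countably many discontinuity points, call them $S_K$. For $\tau \notin S_K$ the time-slice $\mu(\{\tau\} \times K)$ vanishes, and the aim is to show this forces $D_x u(\tau,\cdot)$ to have no Cantor part on $K$.

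To establish that implication, the strategy is to apply Vol'pert's chain rule to $\eta(u(\tau,\cdot))$ and $q(u(\tau,\cdot))$ and to decompose the distributional identity $\partial_t \eta(u) + \partial_x q(u) = -\mu$ into absolutely continuous, jump and Cantor parts. The absolutely continuous piece cancels by the PDE itself; the jump piece cancels thanks to the Rankine--Hugoniot relation together with Lax admissibility at each shock; the Cantor piece is the only one that remains. Uniform convexity $f'' \ge c > 0$ on the bounded range of $u$ then guarantees that this Cantor contribution is strictly signed (negative), so a nontrivial Cantor part of $D_x u(\tau,\cdot)$ on $K$ would force $\mu$ to carry a Dirac mass in time at $\tau$, contradicting $\tau \notin S_K$. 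Exhausting $\R$ by a countable family of intervals $K$ and unioning the corresponding exceptional sets $S_K$ produces the desired countable set $S$.

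The hard part will be the chain-rule decomposition and the exact cancellation of the jump terms, which combines the fine structure theorem for BV functions with the Lax admissibility analysis of each shock; strict convexity enters precisely here to ensure the remaining Cantor dissipation is signed and non-degenerate.
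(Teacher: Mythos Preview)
The paper does not prove this lemma; it is quoted from \cite{R} with the single remark that the concave case follows from the convex one. Your reflection $v(t,x)=u(t,-x)$ is exactly that reduction, and it is correct.

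Your sketch of the convex case, however, has the chain--rule decomposition backwards. You assert that in $\partial_t\eta(u)+\partial_x q(u)=-\mu$ the jump part cancels and the Cantor part survives. In fact Vol'pert's rule treats the absolutely continuous and the Cantor parts identically: on the diffuse part of $Du$ one has
\[
\partial_t\eta(u)+\partial_x q(u)=\eta'(u)\big(u_t+f'(u)u_x\big)=0,
\]
so the Cantor contribution vanishes by the very computation you invoke for the a.c.\ part. The jump part does \emph{not} cancel: Rankine--Hugoniot holds for $(u,f(u))$, not for $(\eta(u),q(u))$, and the mismatch $\sigma[\eta]-[q]$ is exactly the (strictly negative, by convexity and admissibility) entropy production at each shock. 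Hence $\mu$ is concentrated on the shock curves and carries no information whatsoever about the Cantor part of $D_xu(\tau,\cdot)$. Since shock curves are Lipschitz graphs over $t$, one even has $\mu(\{\tau\}\times K)=0$ for \emph{every} $\tau$, so the monotone function $F$ you introduce is continuous and the argument proves nothing.

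The actual mechanism is different. In \cite{AD} one first uses the Oleinik one--sided bound $f'(u)_x\le 1/t$ to make the positive part of $D_xu(t)$ absolutely continuous, and then shows, via generalized characteristics, that a Cantor part in the negative component forces a downward jump of a bounded monotone functional of $t$; this can happen at most countably many times. Theorem~\ref{t:me} in the present paper is the system--level incarnation of the same idea: the controlling measure $\mu_i^{\mathrm{ICJ}}$ is built from interaction, cancellation and jump in the wave--front tracking, not from entropy dissipation, and it is the decay estimate \eqref{e:me}, not a chain--rule identity, that ties a Cantor part at time $t$ to an atom of $\mu_i^{\mathrm{ICJ}}$ on $\{t\}\times\R$.
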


Further, by Volpert's Chain Rule (Theorem 3.99 of \cite{AFP}), it follows that $f'(u(\tau,\cdot)) \in \mathrm{SBV}_{\mathrm{loc}}(\R)$ for $\tau\in\R^+\setminus S$: actually, since $f'' \not= 0$, the two conditions $f'(u(\tau)) \in \mathrm{SBV}_{\mathrm{loc}}$ and $u(\tau) \in \mathrm{SBV}_{\mathrm{loc}}$ are equivalent.

Following the same argument together with the analysis in \cite{R}, we can get a SBV regularity of the slope of  characteristics for the scalar conservation law with general flux.

\begin{proof}[Proof of Theorem \ref{t:scalar}]
Recall that if $u \in \mathrm{BV}(\R^+ \times \R)$ is an entropy solution, then by the theory of entropy solutions it follows that $u_\tau(\cdot):=u(\tau,\cdot)\in \mathrm{BV}(\R)$ for all $\tau \in \R^+$.

Define the sets
\begin{align*}
J_\tau: =&~ \big\{ x \in \R:\ u(\tau,x-)\neq u(\tau,x+) \big\}, \\
F_\tau: =&~ \big\{ x \in \R:\ f''(u(\tau,x))=0 \big\},\\
C: =&~ \big\{ (\tau,\xi)\in\R^+\times\R:\ \xi \in J_\tau \cup F_\tau \big\}.
\end{align*}
Set also $C_\tau := J_\tau \cup F_\tau$ as the $\tau$-section of $C$.

Since the Cantor part $D^\mathrm{c} u_\tau$ of $D u_\tau$ and the jump part $D^\mathrm{ac} u_\tau$ of $D u_\tau$ are mutually singular, then $|D^\mathrm{c} u_\tau|(J_\tau)=0$. 
Using the fact that $f''(u_\tau)=0$ on $F_\tau$, by Volpert's Chain Rule one obtains
\begin{align*}
|D^\mathrm{c} f(u_\tau)|(C_\tau) 
\leq&~ |D^\mathrm{c} f(u_\tau)|(J_\tau)+|D^\mathrm{c} f(u_\tau)|(F_\tau) \crcr
=&~ |f''(u_\tau)D^\mathrm{c} u_\tau|(J_\tau)+|f''(u_\tau)D^\mathrm{c} u_\tau|(F_\tau) = 0.
\end{align*}

Let  $(t_0,x_0) \in \R^+ \times \R \setminus C$. Using the finite speed of propagation and the maximum principle for entropy solutions and the fact that $u_{t_0}$ is continuous at $x_0$ by the definition of $C$, it is possible to find a triangle of the form
\begin{equation}
\label{e:triangle_T}
T(t_0,x_0):=\Big{\{}(t,x):\ |x-x_0|<b_0-\bar{\lambda}(t-t_0),\ 0<t-t_0<b_0/ \bar{\lambda}\Big{\}}
\end{equation} 
such that  $f''(u(t,x))\geq c_0>0$, for any $(t,x)\in T(t_0,x_0)$. Here $c_0$ depends on $(t_0,x_0)$ and $\bar{\lambda}$ is the maximal speed of propagation, which depends only on the $L^\infty$-bound of $u_{t_0}$ (and hence only depends on the $L^\infty$-bound of $u$ by maximal principle).

In particular, in $T(t_0,x_0)$ the solution $u$  of \eqref{e:basic2} coincides with the solution of the following problem 
\begin{equation*} 
\left\{
\begin{array}{l}
w_t + f(w)_x = 0,  \\
w(t_0,x) = \begin{cases}
u_{t_0}(x) & |x-x_0|<b_0, \\
\frac{1}{2b_0}\int^{x_0+b_0}_{x_0-b_0}u_{t_0}(y)dy & |x-x_0 |\geq b_0.
\end{cases}
\end{array}
\right.
\end{equation*}
By Lemma \ref{l:g},  $w(t,\cdot)$ is SBV regular for any $t>t_0$ out of a countable set of times $S(t_0,x_0)$. Write $T_\tau(t_0,x_0):=T(t_0,x_0)\cap\{t=\tau\}$, thus $u_\tau\llcorner_{T_\tau(t_0,x_0)}$ and $f'(u_\tau)\llcorner_{T_\tau(t_0,x_0)}$ are SBV for $\tau\in ]t_0,\ t_0+b/\bar{\lambda}[\setminus S{(t_0,x_0)}$. 
  
Let $B$ be the set of all points of $\R^+\times\R\setminus C$ which are contained in at least one of these triangles. (Notice that $T(t_0,x_0)$ is a open set and does not contain the point $(t_0,x_0)$.) Let $C':=\R^+\times \R\setminus(B\cup C)$. We claim that the set $S_{C'}:=\{\tau\in\R^+:\{t=\tau\}\cap C'\neq\emptyset\}$ is at most countable. Indeed, it is enough to prove that the set $S_{K}:=\{\tau\in\R^+:\{t=\tau\}\cap C'\cap K \neq\emptyset\}$ is at most countable for every compact set $K\subset \R^+ \times \R$ when the triangles $T(t',x')$ have a base of fixed length for every $(t',x')\in C'$: it is fairly simple to see that in this case the set $
S_K$ is finite since $(t',x')$ can not be contained in any other $T(t'',x'')$ for $t'\ne t''$ and $(t'',x'')\in C'$.

Finally, let  $\{T(t_i,x_i)\}_{i\in \N}$ be a countable subfamily of the triangles covering $B$. From the previous observation on the function $u \llcorner_{T(t_i,x_i)}$, the set
\[
S_i:= \big\{ \tau: u_\tau\llcorner_{T_\tau(t_i,x_i)}\notin \mathrm{SBV}( T_\tau(t_i,x_i)) \big\}
\]
is at most countable. For any $\tau$ not in the countable set
\[
S_{C'} \cup \bigcup_{i \in \N} S_i,
\]
one obtains the following inequality:
\begin{equation}
\label{e:final_SBV}
|D^\mathrm{c} f'(u_\tau)(\R)| \leq |D^\mathrm{c} f'(u_\tau)| \bigg( \bigcup_{i \in \N} T_\tau(t_i,x_i) \bigg) + |D^\mathrm{c} f'(u_\tau)|(C_\tau)=0.
\end{equation}
This concludes the proof.
\end{proof}

By a standard argument in the theory of BV functions, we have the following result.

\begin{corollary}
Let $u \in L^\infty(\R^+\times\R)$ be an entropy solution of the scalar conservation law \eqref{e:basic2}.  Then $f'(u)\in \mathrm{SBV}_{\mathrm{loc}}(\R^+\times\R)$.
\end{corollary}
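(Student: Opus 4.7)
The plan is to upgrade the $\L^1$-a.e.-in-time $\mathrm{SBV}$ regularity of spatial slices supplied by Theorem~\ref{t:scalar} to joint $\mathrm{SBV}_{\mathrm{loc}}$ regularity on $\R^+\times\R$ via the slicing theory of $\mathrm{BV}$ functions.

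As a preliminary step I would establish $f'(u)\in\mathrm{BV}_{\mathrm{loc}}(\R^+\times\R)$. When $u\in\mathrm{BV}_{\mathrm{loc}}$ this is immediate from Volpert's chain rule since $f\in C^2$. In the purely $L^\infty$ setting, one observes that $v:=f'(u)$ formally satisfies the Burgers-type equation $v_t+vv_x=0$, whose strict convexity produces an Oleinik one-sided Lipschitz bound and hence $\mathrm{BV}_{\mathrm{loc}}$ regularity for $t>0$. Alternatively, finite speed of propagation together with a density argument lets one reduce to the $\mathrm{BV}$-data setting of Theorem~\ref{t:scalar}.

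I would then invoke the slicing characterization of $\mathrm{SBV}$ (Theorem 3.108 of \cite{AFP}): a $\mathrm{BV}$ function on $\R^2$ lies in $\mathrm{SBV}_{\mathrm{loc}}$ iff for each coordinate direction its 1-D slices lie in $\mathrm{SBV}_{\mathrm{loc}}$ for $\L^1$-a.e.\ slice. The spatial direction is handled directly by Theorem~\ref{t:scalar}, since the exceptional set $S\subset\R^+$ is countable and hence $\L^1$-null. For the temporal direction, Volpert's chain rule applied to the PDE $u_t+(f(u))_x=0$ yields, as measures on $\R^+\times\R$,
\[
D_t f'(u)=\tilde f''(u)\,D_t u=-\tilde f''(u)\,\tilde f'(u)\,D_x u=-\tilde f'(u)\,D_x f'(u),
\]
where $\tilde f'(u),\tilde f''(u)$ are the bounded Borel Volpert averages. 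Because multiplication by a bounded Borel function preserves the Lebesgue decomposition, one gets $(D^c f'(u))_t=-\tilde f'(u)(D^c f'(u))_x=0$, using that $(D^c f'(u))_x=0$ by the spatial step. Both coordinate components of $D^c f'(u)$ vanish, so the slicing theorem gives $f'(u)\in\mathrm{SBV}_{\mathrm{loc}}(\R^+\times\R)$.

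The hard part will be the preliminary $\mathrm{BV}_{\mathrm{loc}}$ step for merely $L^\infty$ data: although $v=f'(u)$ formally solves $v_t+vv_x=0$, the Rankine--Hugoniot condition for $u$ does not in general coincide with the one for $v$, so $v$ need not satisfy Burgers' equation distributionally across shocks and a direct application of Oleinik's estimate must be justified, e.g.\ through a stable approximation argument that preserves BV bounds in the limit. Once the $\mathrm{BV}_{\mathrm{loc}}$ regularity of $f'(u)$ is in hand, the slicing/chain rule argument above is essentially mechanical.
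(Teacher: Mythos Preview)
Your approach is essentially the same as the paper's: slicing theory (Theorem~3.107--108 of \cite{AFP}) handles the $x$-direction via Theorem~\ref{t:scalar}, and Volpert's chain rule combined with the PDE kills $D^{\mathrm c}_t f'(u)$ through the identity $D^{\mathrm c}_t f'(u)=-f'(u)\,D^{\mathrm c}_x f'(u)=0$. Your additional concern about the preliminary $\mathrm{BV}_{\mathrm{loc}}$ step for merely $L^\infty$ data is legitimate but not addressed in the paper either, since the paper's standing assumption \eqref{e:basic2} already has $u_0\in\mathrm{BV}$ and hence $u\in\mathrm{BV}_{\mathrm{loc}}$; the Burgers-equation route you sketch for this step is, as you note, not directly justified.
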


The difference is that now the function $f'(u)$ is considered as a function of two variable.

\begin{proof}
The starting point is that up to a countable set of times, $Df'(u(t,\cdot))$ has no Cantor part (Theorem \ref{t:scalar}). From the slicing theory of BV function (Theorem 3.107-108 of \cite{AFP}), we know that the Cantor part of the 2-dimensional measure $D_xf'(u)$ is the integral with respect of $t$ of the Cantor part of $Df'(u(t,\cdot))$. This concludes that $D_xf'(u)$ has no Cantor part, i.e. $D^\mathrm{c}_xf'(u)=0$.

By combining Volpert's Chain Rule and the conservation law \eqref{e:basic2}, one has
\begin{equation*}
D^\mathrm{c}_t u = -f'(u)D^\mathrm{c}_x u.
\end{equation*}
Using Volpert's rule once again, one obtains
\begin{equation*}
D^\mathrm{c}_t f'(u) = f''(u) D^\mathrm{c}_t u = f''(u)f'(u)D^\mathrm{c}_x u = f'(u)D^\mathrm{c}_x f'(u) = 0,
\end{equation*}
which concludes that also $D_t f(u)$ has no Cantor part.  
\end{proof}

\section{Notations and settings for general systems}
\label{s:gpn}

Throughout the rest of the paper, the symbol $\O(1)$ always denotes a quantity uniformly bounded by a constant depending only on the system \eqref{e:basic3}.

\subsection{Preliminary notation}

Consider the Cauchy problem
\begin{equation}
\label{e:basic3}
\begin{cases}
u_t+f(u)_x=0 & u:\R^+\times\R\to\Omega\subset\R^N,\ f\in C^2(\Omega,\R), \crcr
u_{|t=0}=u_0 & u_0\in \mathrm{BV}(\R,\Omega).
\end{cases}
\end{equation}

The only assumption is strict hyperbolicity in $\Omega$: the eigenvalues $\{\lambda_i(u)\}_{i=1}^N$ of the Jacobi matrix $A(u)=Df(u)$ satisfy
\begin{equation*}
\lambda_1(u)<\dots<\lambda_N(u), \qquad u \in\Omega.
\end{equation*}
Furthermore, as we only consider the solutions with small variation, it is not restrictive to assume $\Omega$ compact. Hence there exist constants $\{\check{\lambda}_j\}^N_{j=0}$, such that
\begin{equation}\label{lambda}
\check{\lambda}_{k-1}<\lambda_k(u)<\check{\lambda}_{k}, \qquad \forall u\in\Omega,\ k=1,\dots, N.
\end{equation}

Let  $\{r_i(u)\}_{i=1}^N$ and $\{l_j(u)\}_{j=1}^N$ be a base of right and left eigenvectors, depending smoothly on $u$, such that 
\begin{equation}\label{assumponri}
l_j(u) \cdot r_i(u) = \delta_{ij} \text{ and } |r_i(u)| \equiv 1, \quad i=1,\dots, N.
\end{equation}

\begin{definition}
For $i=1,\dots,N$, we say that the $i$-th characteristic field (or $i$-th family) is \emph{genuinely nonlinear} if
\begin{equation*}
\nabla\lambda_i(u)\cdot r_i(u)\neq 0 \quad \text{for all} \ u \in \Omega,
\end{equation*}
and we say that the $i$-th characteristic field (or $i$-th family) is \emph{linearly degenerate} if instead
\begin{equation*}
\nabla\lambda_i(u)\cdot r_i(u)=0 \quad \text{for all} \ u\in\Omega.
\end{equation*}
\end{definition}

In the following, if the $i$-th characteristic field is genuinely nonlinear, instead of \eqref{assumponri} we normalize $r_i(u)$ such that 
\begin{equation}
\label{e:gennon_orient}
\nabla\lambda_i(u)\cdot r_i(u)\equiv 1.
\end{equation}

In \cite{BB}, it is proved that if  the total variation of $u_0$ is sufficiently small, the solutions of the viscous parabolic approximation equations
\begin{equation*}
\begin{cases}
u_t+f(u)_x=\epsilon u_{xx}, \crcr
u(0,x)=u_0(x),
\end{cases}
\end{equation*}
are uniformly bounded, and the limit of $u^\epsilon$ as $\epsilon \rightarrow 0$ is called \emph{vanishing viscosity solution} of \eqref{e:basic3} and it is BV function. 

\subsection{Construction of solutions to Riemann problem}\label{RP}

The Riemann problem is the Cauchy problem \eqref{e:basic3} with piecewise constant  initial data of the form
\begin{equation}\label{e:r}
 u_0=\left\{\begin{array}{ll}
u^\mathrm{L} & \mbox{$x<0$,}\\
u^\mathrm{R} & \mbox{$x>0$.}
\end{array}\right.
\end{equation}
The solution to this problem is the key ingredient for building the front-tracking approximate solution: the basic step is the construction of the admissible \emph{elementary curve} of the $k$-th family for any give left state $u^\mathrm{L}$. 
 
A working definition of admissible elementary curves can be given by means of the following theorem.

\begin{theorem}\cite{B2,BB}
\label{t:ec}
For every $u\in\Omega$, there exist
\begin{enumerate}
\item N Lipschitz continuous curves $s \mapsto T^k_s[u]\in\Omega$, $k=1,\dots,N$, satisfying $\lim_{s\rightarrow 0}\frac{d}{ds}T^k_s[u]=r_k(u)$,
\item and N Lipschitz functions $(s,\tau) \mapsto \sigma^k_s[u](\tau)$, with $0 \leq \tau \leq s$ and $k=1,\dots,N$, satisfying $\tau \mapsto \sigma^k_s[u](\tau)$ increasing and $\sigma^k_0[u](0) = \lambda_k(u)$,
\end{enumerate}
with the following properties.

\noindent When $u^\mathrm{L}\in\Omega,\ u^\mathrm{R}=T_s^k[u^\mathrm{L}]$, for some $s$ sufficiently small, the unique vanishing viscosity solution of the Riemann problem \eqref{e:basic3}-\eqref{e:r} is defined a.e. by 
\begin{equation*}
u(t,x) :=
\begin{cases} 
u^\mathrm{L} & x/t < \sigma^k_s[u^\mathrm{L}](0), \crcr
T^k_\tau[u^\mathrm{L}] & x/t=\sigma^k_s[u^\mathrm{L}](\tau), \tau \in [0,s], \crcr
u^\mathrm{R} & x/t>\sigma^k_s[u^\mathrm{L}](s).
\end{cases}
\end{equation*}
\end{theorem}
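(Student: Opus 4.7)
The plan is to construct the admissible elementary curves via the vanishing viscosity approach, by finding traveling wave profiles on a center manifold of the viscous shock equation and then gluing them in order of increasing speed. The self-similar Riemann solution described in the theorem will then be built from these profiles.

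First, I would look for self-similar solutions $u(t,x)=U(x/t)$ of the viscous system $u_t+f(u)_x=\epsilon u_{xx}$ connecting $u^\mathrm{L}$ to $u^\mathrm{R}$ through the $k$-th family. Writing the traveling wave equation for a profile of speed $\sigma$ close to $\lambda_k(u^\mathrm{L})$ as $u'=v,\ v'=(A(u)-\sigma)v$, the equilibrium $(u^\mathrm{L},0,\lambda_k(u^\mathrm{L}))$ has a low-dimensional center manifold: one state-direction spanned by $r_k(u^\mathrm{L})$, one velocity-direction, and a parameter for $\sigma$. On this manifold the dynamics reduces to a scalar conservation-law-like structure with a reduced flux $g_k(\tau;u^\mathrm{L})$, and admissible Riemann solutions correspond to the concave/convex envelope of $g_k$ on $[0,s]$ in the sense of Liu. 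Setting $T^k_s[u^\mathrm{L}]$ to be the endpoint of the curve parameterized along the center manifold and $\sigma^k_s[u^\mathrm{L}](\tau)$ to be the envelope slope at $\tau$ then gives the required objects, with monotonicity of $\tau\mapsto\sigma^k_s[u^\mathrm{L}](\tau)$ built into the Liu construction and $\sigma^k_0[u^\mathrm{L}](0)=\lambda_k(u^\mathrm{L})$ coming automatically from evaluation at the equilibrium.

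The Lipschitz continuity of $s\mapsto T^k_s[u]$ and of $(s,\tau)\mapsto\sigma^k_s[u](\tau)$, jointly with $u$, follows from smoothness of the center manifold in $u^\mathrm{L}$, Lipschitz dependence of concave/convex envelopes on their data, and the implicit function theorem for the equilibrium equations; the tangency $\lim_{s\to 0}\tfrac{d}{ds}T^k_s[u]=r_k(u)$ is just the identification of the tangent direction to the center manifold at the equilibrium. The piecewise self-similar function defined in the statement is then obtained by gluing the traveling wave profiles according to the increasing speeds $\sigma^k_s[u^\mathrm{L}](\tau)$, so that the formula given in the theorem agrees by construction with the candidate vanishing viscosity solution.

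The main obstacle is to verify that this self-similar object really is the vanishing viscosity limit, and the unique such limit, for the Riemann data $(u^\mathrm{L},u^\mathrm{R})$. Without genuine nonlinearity or linear degeneracy, the $k$-wave may contain an arbitrary alternation of rarefaction-like and shock-like segments, and one must propagate sharp BV and $L^1$-stability estimates across all of them. This requires the full machinery of \cite{BB}: uniform BV bounds for the viscous approximations, $L^1$ stability of the semigroup, and a delicate interaction/cancellation analysis near the equilibrium. This is the technically hardest part; once it is in place, matching the limit with the envelope construction on the center manifold gives the admissibility and uniqueness claims in the theorem.
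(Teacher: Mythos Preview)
The paper does not prove Theorem~\ref{t:ec} at all: it is stated with the citation \cite{B2,BB} and used as a black box (``A working definition of admissible elementary curves can be given by means of the following theorem''). So there is no proof in the paper to compare your proposal against.

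That said, your sketch is a faithful outline of the construction actually carried out in the cited references, in particular \cite{BB}: the center manifold reduction of the traveling-wave ODE near $(u^\mathrm{L},0,\lambda_k(u^\mathrm{L}))$, the reduced scalar flux and its convex/concave envelope giving the Liu-admissible $k$-wave, and the identification of $T^k_s$ and $\sigma^k_s$ from this construction. Your assessment of where the real difficulty lies --- the uniform BV bounds and $L^1$ stability needed to show that the envelope construction is indeed the vanishing viscosity limit --- is also accurate, and is precisely the content of \cite{BB}. For the purposes of this paper, however, none of this needs to be reproduced; the theorem is simply quoted.
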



\begin{remark}
If $i$-th family is genuinely nonlinear, then the Lipschitz curve $T^i_s[\bar{u}]$ can be written as
\begin{equation*}\label{e:gc}
T^i_s[\bar{u}] =
\begin{cases}
R_i[\bar{u}](s) & s \geq 0, \crcr
S_i[\bar{u}](s) & s<0,
\end{cases}
\end{equation*}
where $R_i[\bar{u}]$, $S_i[\bar{u}]$ are respectively the rarefaction curve and the Rankine-Hugoniot curve of the $i$-th family with any given point $\bar{u}$ in $\Omega$. And certain elementary weak solution, called rarefaction waves and shock waves can be defined along the rarefaction curve and Rankine-Hugoniot curve, for example see \cite{Bre}. The elementary curve $T^i_s[\bar{u}]$ is parametrized by 
\begin{equation}\label{parameter of T}
 s=l_i(\bar{u})\cdot (T^i_s[\bar{u}]-\bar{u})
\end{equation}

\end{remark}

The vanishing viscosity solution \cite{BB} of a Riemann problem for \eqref{e:basic3} is obtained by constructing a Lipschitz continuous map
\begin{equation*}
(s_1,\dots,s_N)\mapsto T^N_{s_N}\big[T^{N-1}_{s_{N-1}}\big[\cdots\left[T^1_{s_1}[u^\mathrm{L}]\right]\big]\big]=u^\mathrm{R},
\end{equation*}
which is one to one from a neighborhood of the origin onto a neighborhood of $u^\mathrm{L}$. Then we can uniquely determine intermediate states $u^\mathrm{L}=\omega_0,\omega_1,\dots,\omega_N = u^\mathrm{R}$, and the \emph{wave sizes} $s_1,s_2,\dots,s_N$ such that
\begin{equation*}
\omega_k = T^k_{s_k}[\omega_{k-1}], \quad k=1,\dots,N,
\end{equation*}
provided that $|u^\mathrm{L}-u^\mathrm{R}|$ is sufficiently small.

By Theorem \ref{t:ec}, each Riemann problem with initial date
\begin{equation}\label{e:erp}
u_0 =
\begin{cases}
\omega_{k-1} & x<0, \\
\omega_k & x>0,
\end{cases}
\end{equation}
admits a vanishing viscosity solution $u_k$, containing a sequence of rarefactions, shocks and  discontinuities of the $k$-th family: we call $u_k$ the $k$-th \emph{elementary composite wave}. Therefore, under the strict hyperbolicity assumption, the general solution of the Riemann problem with the initial data \eqref{e:r} is obtained by piecing together the vanishing viscosity solutions of the elementary Riemann problems given by \eqref{e:basic3}-\eqref{e:erp}.

Indeed, from the uniform hyperbolicity assumption \eqref{lambda}, the speed of each elementary $k$-th wave in the solution $u_k$ is inside the interval $[\check{\lambda}_{k-1},\check{\lambda}_{k}]$ if $s \ll 1$, so that the solution of the general Riemann problem \eqref{e:basic3}-\eqref{e:r} is then given by
\begin{equation}
\label{e:riemann solution}
u(t,x) =
\begin{cases}
         u^\mathrm{L} & x/t <\check{\lambda}_{0}\\
         u_k(t,x) & \check{\lambda}_{k-1}<x/t<\check{\lambda}_{k}, k=1,\dots,N),\\
         u^\mathrm{R} & x/t>\check{\lambda}_{N}.
         \end{cases}
\end{equation}

\begin{remark}
If the characteristic fields are either genuinely nonlinear or linearly degenerate, the admissible solution of Riemann problem \eqref{e:basic3}-\eqref{e:r} consists of N family of waves. Each family contains either only one shock, one rarefaction wave or one contact discontinuity. However, the general solution of a Riemann problem provided above may contain a countable number of rarefaction waves, shock waves and contact discontinuities.
\end{remark}

\subsection{\texorpdfstring{Cantor part of the derivative of characteristic for $i$-th waves}{Cantor part of the derivative of characteristic for i-th waves}}

Recalling the solution \eqref{e:riemann solution} to the Riemann problem \eqref{e:basic3}-\eqref{e:r}, we denote $\tilde{\lambda}_i(u^\mathrm{L},u^\mathrm{R})$ as the $i$-th eigenvalue of the average matrix 
\begin{equation}\label{average matrix}
A(u^\mathrm{L},u^\mathrm{R})=\int^1_0 A(\theta u^\mathrm{L}+(1-\theta)u^\mathrm{R})d\theta,
\end{equation}
and $\tilde{l}_i(u^\mathrm{L},u^\mathrm{R})$, $\tilde{r}_i(u^\mathrm{L},u^\mathrm{R})$ are the corresponding left and right eigenvector satisfying $\tilde{l}_i \cdot \tilde{r}_i = \delta_{ij}$ and $|\tilde{r}_j| \equiv 1$, for every $i,j\in\{1,\dots,N\}$. Define thus
\begin{subequations}
\label{e:tilde_all}
\begin{equation}
\label{d:l}
\tilde{\lambda}_i(t,x)=\tilde{\lambda}_i(u(t,x-),u(t,x+)),
\end{equation}
\begin{equation}
\label{e:tilde_r}
\tilde{r}_i(t,x)=\tilde{r}_i(u(t,x-),u(t,x+)),
\end{equation}
\begin{equation}
\label{e:tilde_l}
\tilde{l}_i(t,x)=\tilde{l}_i(u(t,x-),u(t,x+)).
\end{equation}
\end{subequations}

Since the $\tilde r_i$, $\tilde l_i$ have directions close to $r_i$, $l_i$, one can decompose $D_xu$ into the sum of N measures:
\begin{equation*}
D_x u = \sum_{k=1}^N v_k \tilde{r}_k.
\end{equation*} 
where $v_i = \tilde{l}_i \cdot D_xu$ is a scalar valued measure which we call as \emph{$i$-th wave measure} \cite{Bre}.

In the same way we can decompose the a.c. part $D^{\mathrm{ac}}_x u$, the Cantor part $D^{\mathrm{c}}_x u$ and the jump part $D^{\mathrm{jump}}_x u$ of $D_x u$ as
\[
D^{\mathrm{ac}}_x u = \sum_{k=1}^N v_k^{ac} \tilde{r}_k, \quad
D^{\mathrm{c}}_x u = \sum_{k=1}^N v_k^{\mathrm{c}} \tilde{r}_k, \quad
D^{\mathrm{jump}}_x u = \sum_{k=1}^N v_{k}^{\mathrm{jump}} \tilde{r}_k.
\]
We call $v_i^\mathrm{c}$ the Cantor part of $v_i$ and denote by
\begin{equation*}
v^{\mathrm{cont}}_{i} := v^\mathrm{c}_i + v^{ac}_i=\tilde{l}_i\cdot(D^\mathrm{c}_xu+D^\mathrm{ac}_i u)
\end{equation*}
the continuous part of $v_i$. According to Volpert's Chain Rule
\begin{equation}
D_x\lambda_i(u)=\nabla\lambda_i(u)(D^\mathrm{ac}_x u + D^\mathrm{c}_x u) + [\lambda_i(u^+)-\lambda_i(u^-)] \delta_x,
\end{equation}
and then
\begin{equation}
D^\mathrm{c}_x\lambda_i(u)=\nabla\lambda_i \cdot D_x^\mathrm{c}u=\sum_k \big( \nabla\lambda_i\cdot\tilde{r}_k \big) v_k^\mathrm{c}.
\end{equation}

We define the \emph{$i$-th component of $D_x\lambda_i(u)$} as
\begin{equation}
\label{E_i_th_comp_lambda}
[D_x\lambda_i(u)]_i:= \big( \nabla\lambda_i \cdot \tilde{r}_i \big) v^\mathrm{cont}_i + [\lambda_i(u^+)-\lambda_i(u^-)] \frac{|v^\mathrm{jump}_i(x)|}{\sum_k |v^{\mathrm{jump}}_k(x)|},
\end{equation}
and the \emph{Cantor part of $i$-th component of $D_x\lambda_i(u)$} to be
\begin{equation*}
[D^\mathrm{c}_x\lambda_i(u)]_i:= \big( \nabla\lambda_i \cdot \tilde{r}_i \big) v^\mathrm{c}_{i}.
\end{equation*}

\section{Main SBV regularity argument}
\label{s:main result}

Following \cite{Lau}, the key idea to obtain SBV-like regularity for $v_i$ is to prove a decay estimate for the continuous part of $v_i$. We state here the main estimate of our paper.

\begin{theorem}\label{t:me}
Consider the general strictly hyperbolic system \eqref{e:basic3}, and suppose that the $i$-th characteristic field  is genuinely nonlinear. Then there exists a finite, non-negative Radon measure $\mu^\mathrm{ICJ}_i$ on $\R^+\times\R$ such that for $t>\tau>0$
\begin{equation}\label{e:me}
\big{|}v^\mathrm{cont}_i(t)\big{|}(B)\leq\mathcal{O}(1)\bigg{\{}\frac{\mathcal{L}(B)}{\tau}+\mu^\mathrm{ICJ}_i([t-\tau,t+\tau]\times\R)\bigg{\}}
\end{equation}
for all Borel subset $B$ of $\R$.
\end{theorem}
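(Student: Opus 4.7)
The plan is to prove the discrete analogue of \eqref{e:me} at the level of the wave-front tracking approximation $u_\nu$ introduced in Section \ref{Ss_wavefront}, and then to pass to the weak limit as $\nu \to \infty$ using the convergence results of Section \ref{Ss_decay}. Throughout, fix the genuinely nonlinear index $i$.

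First, I would associate to each $u_\nu$ three non-negative Radon measures on $\R^+ \times \R$: the interaction measure $\mu^\mathrm{I}_{i,\nu}$, charging every interaction point involving at least one $i$-wave by the product of the sizes of the incoming waves; the cancellation measure $\mu^\mathrm{C}_{i,\nu}$, charging each collision of two opposite-sign $i$-waves by the amount cancelled; and the jump measure $\mu^\mathrm{J}_{i,\nu}$, supported on the worldlines of the $(\epsilon_1,\epsilon_0)$-shocks of the $i$-th family. Their total masses are controlled uniformly in $\nu$ by the Glimm interaction potential, so the combined measure $\mu^\mathrm{ICJ}_{i,\nu} := \mu^\mathrm{I}_{i,\nu} + \mu^\mathrm{C}_{i,\nu} + \mu^\mathrm{J}_{i,\nu}$ is uniformly bounded and, up to extraction of a subsequence, converges weakly to a finite non-negative measure $\mu^\mathrm{ICJ}_i$.

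Next I would prove the discrete estimate: for $t > \tau > 0$ and every Borel $B \subset \R$,
\begin{equation*}
|v^+_{i,\nu}(t)|(B) \leq \mathcal{O}(1) \Bigl\{ \frac{\mathcal{L}(B)}{\tau} + \mu^\mathrm{ICJ}_{i,\nu}([t-\tau,t+\tau] \times \R) \Bigr\},
\end{equation*}
where $v^+_{i,\nu}$ is the measure carried by the positive (rarefaction) $i$-fronts of $u_\nu$. The key ingredient is that two neighbouring positive $i$-fronts of sizes $\sigma,\sigma'$ at spatial distance $d$ have $i$-speeds differing by $\sigma + \sigma'$ up to $\mathcal{O}(1)$ interaction corrections, thanks to genuine nonlinearity normalised as in \eqref{e:gennon_orient}. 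Tracing such a pair backward along the $i$-characteristics over a time window of length $\tau$, there are four mutually exclusive scenarios: they merge at an $i$-$i$ interaction (charged to $\mu^\mathrm{I}_{i,\nu}$); they cancel, or are absorbed into a surviving $(\epsilon_1,\epsilon_0)$-shock (charged to $\mu^\mathrm{C}_{i,\nu} + \mu^\mathrm{J}_{i,\nu}$); they interact with a wave of a different family (charged to $\mu^\mathrm{I}_{i,\nu}$); or they remain separate for the full length $\tau$, in which case the Oleinik-type spreading $d \geq \tau(\sigma+\sigma')$ gives, after summing along a chain of fronts in $B$, the Lebesgue contribution $\mathcal{L}(B)/\tau$. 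The negative part $v^-_{i,\nu}$ is controlled by the positive one via the decay of negative waves proved in Section \ref{ss_negdec}, modulo a further $\mu^\mathrm{ICJ}_{i,\nu}$-controlled error.

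To conclude, I would pass to the limit $\nu \to \infty$ and invoke lower semicontinuity of the total variation under weak convergence of measures to transfer the estimate to $v_i$; since $v_i^\mathrm{cont}$ has no atoms, restricting to the continuous part does not affect the bound on Borel sets. The main obstacle will be the spreading step: the classical Oleinik argument assumes that every family is genuinely nonlinear, so one must carefully book-keep the speed perturbations of $i$-fronts produced by crossings with waves of the other, possibly linearly degenerate, families and show that they are absorbed into $\mu^\mathrm{I}_{i,\nu}$ with constants independent of $\nu$. This is exactly the content of the observation stated in the introduction that, within a general strictly hyperbolic system, the wave pattern of a single genuinely nonlinear family behaves structurally as in the fully genuinely nonlinear case.
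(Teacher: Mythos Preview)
Your spreading argument for positive $i$-waves is essentially the decay-of-rarefactions estimate that the paper invokes as Theorem~\ref{t:bde}, so that half is fine. The genuine gap is the negative part. You dismiss it in one line, citing ``the decay of negative waves proved in Section~\ref{ss_negdec}'', but that section \emph{is} the proof of the negative half of \eqref{e:me}; the reference is circular. More substantively, your phrase ``controlled by the positive one'' misidentifies the mechanism: negative $i$-fronts are shocks, they do not spread, and your backward-tracing four-scenario argument gives no Lebesgue bound for them --- a single shock of size $s$ in $B$ already makes $[v_i]^-(B)\ge |s|$ regardless of $\mathcal{L}(B)/\tau$. What the paper actually does is to subtract the large shocks \emph{first}: one splits $v^\nu_i = v^{\nu,\mathrm{jump}}_i + v^{\nu,\mathrm{cont}}_i$ at the approximation level, with the jump part carried by the $(\epsilon^0,\epsilon^1)$-shock fronts. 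The associated source $\mu^{\nu,\mathrm{jump}}_i := \partial_t v^{\nu,\mathrm{jump}}_i + \partial_x(\tilde\lambda^\nu_i v^{\nu,\mathrm{jump}}_i)$ is a bounded measure concentrated on the \emph{nodes} (initial, terminal, merging points) of those fronts --- not on their worldlines, as you write --- and $|\mu^{\nu,\mathrm{jump}}_i|$ is what furnishes the jump contribution to $\mu^\mathrm{ICJ}_{i,\nu}$. The decay of $[v^{\nu,\mathrm{cont}}_i]^-$ on an interval $I$ is then obtained by a \emph{forward} argument: follow minimal generalized $i$-characteristics from the endpoints of $I$, set $z(t)$ equal to the (negative) length of the evolved interval, and run a dichotomy. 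Either $\dot z(t)-\dot\Phi(t)z(t) < \tfrac14[v^\nu_i(0)](I)$ for all $t\in[0,\tau]$, which integrates to the Lebesgue term; or the inequality fails at some $t$, and then the balance equations for $v^\nu_i$ and $v^{\nu,\mathrm{jump}}_i$ on the region between the characteristics, together with the lower bound $\xi(t) \ge \tfrac34\big[[v^{\nu,\mathrm{jump}}_i(t)](I(t)) - 2\epsilon^1\big]$ for the boundary flux, produce the $\mu^\mathrm{ICJ}$ term. None of this structure is present in your outline.

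The passage to the limit is also not as you describe. Lower semicontinuity of the total variation of $v^\nu_i$ says nothing about $v^\mathrm{cont}_i$, since the shock atoms sit precisely in $[v_i]^-$ and do not disappear. The paper takes the limit \emph{after} the jump/continuous decomposition, combining the lower semicontinuity \eqref{e:lsc_glimm} of $[v_i]^- + C_0\Q$ with the weak convergence $v^{\nu_k,\mathrm{jump}}_i \rightharpoonup v^\mathrm{jump}_i$ established in Lemma~\ref{l:is}; only this pairing allows the subtraction of the jump part to survive as $\nu\to\infty$.
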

 
Different from \cite{Lau}, we assume only one characteristic field to be genuinely nonlinear and no other requirement on the other characteristic fields. 

Once Theorem \ref{t:me} is proved, then the SBV argument develops as follows \cite{Lau}.

Suppose at time $t=s$, $v_i(s)$ has a Cantor part. Then there exists a $\L^1$-negligible Borel set $K$ with $v^\mathrm{cont}_i(s)(K)>0$ and $D^\mathrm{jump} v_i(K)=0$. Then for all $s>\tau>0$,
\begin{equation*}
0< |v_i(s)|(K) = |v^\mathrm{cont}_i(s)|(K) \leq \O(1)\bigg{\{}\frac{\L^1(K)}{\tau}+\mu^\mathrm{ICJ}_i([s-\tau,s+\tau]\times\R)\bigg{\}}.
\end{equation*}
Since $\L^1(K)=0$, we can let $\tau\rightarrow 0$, and deduce that $\mu^\mathrm{ICJ}_i(\{s\}\times\R) > 0$. This shows that the Cantor part appears at most countably many times because $\mu^\mathrm{ICJ}_i$ is finite.

Then, we can have the following result which generalizes Corollary 3.2 in \cite{Lau} to the case when only one characteristic field is genuinely nonlinear and no assumptions on the others.

\begin{corollary}\label{c:sri}
Let u be a vanishing viscosity solution of the Cauchy problem for the strictly hyperbolic system \eqref{e:basic3}, and assume that the  $i$-th characteristic field is genuinely nonlinear. Then $v_i(t)$ has no Cantor part out of a countable set of times.
\end{corollary}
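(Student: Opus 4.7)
\medskip

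The plan is to apply Theorem \ref{t:me} contrapositively: at any time $s$ where $v_i(s)$ carries a nonzero Cantor component, the measure $\mu^{\mathrm{ICJ}}_i$ must put mass on the time slice $\{s\}\times\R$. Since $\mu^{\mathrm{ICJ}}_i$ is a finite non-negative Radon measure on $\R^+\times\R$, its projection onto the time axis can have only countably many atoms, giving the desired countable exceptional set.

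More concretely, suppose $v_i(s)$ has a nontrivial Cantor part $v_i^{\mathrm{c}}(s)$. By the definition of the Cantor part of a BV measure, there exists a Borel set $K\subset\R$ with $\L^1(K)=0$, $|D^{\mathrm{jump}} v_i(s)|(K)=0$, and $|v_i^{\mathrm{c}}(s)|(K)>0$; in particular $|v_i^{\mathrm{cont}}(s)|(K)\geq|v_i^{\mathrm{c}}(s)|(K)>0$. For every $\tau>0$, Theorem \ref{t:me} applied to $B=K$ at time $t=s$ gives
\begin{equation*}
0 < |v_i^{\mathrm{cont}}(s)|(K) \leq \O(1)\bigg\{\frac{\L^1(K)}{\tau} + \mu^{\mathrm{ICJ}}_i([s-\tau,s+\tau]\times\R)\bigg\} = \O(1)\,\mu^{\mathrm{ICJ}}_i([s-\tau,s+\tau]\times\R),
\end{equation*}
since $\L^1(K)=0$. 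Letting $\tau\downarrow 0$ and using the continuity from above of the finite Radon measure $\mu^{\mathrm{ICJ}}_i$ along the decreasing family of closed sets $[s-\tau,s+\tau]\times\R$, we conclude $\mu^{\mathrm{ICJ}}_i(\{s\}\times\R)>0$.

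Finally, the set
\[
S_i := \big\{s \in \R^+ :\ \mu^{\mathrm{ICJ}}_i(\{s\}\times\R) > 0\big\}
\]
is at most countable, because the pushforward of the finite measure $\mu^{\mathrm{ICJ}}_i$ under the projection $(t,x)\mapsto t$ is a finite measure on $\R^+$ and hence admits only countably many atoms. By the contrapositive of what we proved, $v_i(t)$ has no Cantor part for every $t\in\R^+\setminus S_i$, which is the claim of Corollary \ref{c:sri}. No step looks delicate given Theorem \ref{t:me}; the only point deserving slight care is the choice of the singular set $K$ and the invocation of inner/outer regularity of $\mu^{\mathrm{ICJ}}_i$ to pass to the limit $\tau\downarrow 0$—both are standard facts for finite Radon measures, and the real content of the argument has already been packaged into the decay estimate \eqref{e:me}.
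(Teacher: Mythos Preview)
Your proof is correct and follows essentially the same argument as the paper: pick a Lebesgue-null set $K$ carrying the Cantor part, apply the decay estimate \eqref{e:me} of Theorem~\ref{t:me}, let $\tau\downarrow 0$ to force $\mu^{\mathrm{ICJ}}_i(\{s\}\times\R)>0$, and conclude by finiteness of $\mu^{\mathrm{ICJ}}_i$. Your write-up is slightly more explicit about the measure-theoretic details (continuity from above, pushforward atoms), but the approach is identical.
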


As we see in the scalar case, by proving the SBV regularity of the solution under the genuinely nonlinear assumption of one characteristic field, we can deduce a kind of SBV regularity of the characteristic speed for general systems.

Unlike the scalar case,  we do not have the maximum principle to guarantee the small variation of $u$ in the triangle $T(t_0,x_0)$ defined in \eqref{e:triangle_T}. However, in the system case, we have the following estimates for the vanishing viscosity solutions.

For $a<b$ and $\tau\geq0$, we denote by $\mathrm{Tot.Var.}\{u(\tau);\ ]a,b[\}$ the total variation of $u(\tau)$ over the open interval $]a,b[$. Moreover, consider the triangle
\begin{equation*}
\Delta^{\tau,\eta}_{a,b} := \Big\{ (t,x):\ \tau < t < (b-a)/2\eta,\ a + \eta t < x < b - \eta t \Big\}.
\end{equation*}
The oscillation of $u$ over $\Delta^{\tau,\eta}_{a,b}$ will be denoted by
\begin{equation*}
\mathrm{Osc.}\{u;\ \Delta^{\tau,\eta}_{a,b}\}:=\sup\left\{|u(t,x)-u(t',x')|:\ \ (t,x),\ (t',x')\in\Delta^{\tau,\eta}_{a,b}\right\}.
\end{equation*}

We have the following results.

\begin{theorem}[Tame Oscillation]\cite{BB}\label{t:to}
 There exists $C'>0$ and $\bar{\eta}>0$ such that for every $a<b$ and $\tau\ge0$, one has
\begin{equation*}
\mathrm{Osc.}\{u;\ \Delta^{\tau,\bar{\eta}}_{a,b}\}\leq C'\cdot \mathrm{Tot.Var.}\{u(\tau);\ ]a,b[\}.
\end{equation*}
\end{theorem}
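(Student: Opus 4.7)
The plan is to prove the estimate first at the level of approximate solutions $u^\nu$ (either Bressan-type front-tracking approximations or the viscous approximations $u^\epsilon$ converging to $u$), uniformly in the approximation parameter, and then to pass to the limit. Throughout, I would choose $\bar\eta$ strictly larger than $\max_k |\check\lambda_k|$, so that every wave of $u^\nu$ travels with speed at most $\bar\eta$.

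The first ingredient is a finite-speed-of-propagation property inside $\Delta^{\tau,\bar\eta}_{a,b}$. For any $(t,x)$ in this triangle, the backward cone of slope $\bar\eta$ down to time $\tau$ lies inside $]a,b[$; hence no wave of $u^\nu$ originating outside $]a,b[$ at time $\tau$ reaches $(t,x)$ before time $t$, and modifying the initial datum outside $]a,b[$ does not alter $u^\nu$ inside the triangle. We may therefore replace $u^\nu(\tau,\cdot)$ by an extension $\tilde u_\tau$ that is constant outside $]a,b[$, with $\mathrm{Tot.Var.}(\tilde u_\tau)\leq \mathrm{Tot.Var.}\{u^\nu(\tau);\ ]a,b[\}$, and carry out all subsequent estimates in terms of this modified initial datum.

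The oscillation bound itself is then obtained by connecting two arbitrary points $(t,x),(t',x')\in\Delta^{\tau,\bar\eta}_{a,b}$ (with $t\leq t'$) by a horizontal segment from $(t,x)$ to $(t,x')$ followed by a vertical segment from $(t,x')$ to $(t',x')$. The horizontal increment is bounded by the spatial total variation of $u^\nu(t,\cdot)$ on its slice, which by the standard BV estimate for vanishing viscosity approximations satisfies $\mathrm{Tot.Var.}(u^\nu(t,\cdot))\leq C\cdot\mathrm{Tot.Var.}(\tilde u_\tau)$. The vertical increment is controlled by the total variation in $t$ of the trace $t\mapsto u^\nu(t,x')$, which (up to a constant factor) equals the total strength of waves of $u^\nu$ crossing the line $\{x=x'\}$ during the time interval $]t,t'[$; by the cone property all these waves originate from $]a,b[$ at time $\tau$, so this is again bounded by $\mathcal{O}(1)\cdot\mathrm{Tot.Var.}(\tilde u_\tau)$. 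Summing the two contributions and passing to the limit (using pointwise convergence of $u^\nu$ and lower semicontinuity of total variation) yields the claim with a constant $C'$ depending only on the system.

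The main obstacle is quantifying the temporal trace $t\mapsto u^\nu(t,x')$. For front-tracking approximations this trace is piecewise constant in $t$ with jumps only at wave crossings, so the bound is a direct wave-counting argument backed by the interaction functional. For genuine viscous approximations the trace is smooth, and the wave-counting must be replaced by the weighted $L^1$-stability estimates with exponential weights $e^{\gamma(|x-x'|-\bar\eta(t-\tau))}$ developed in \cite{BB}, together with the interaction-cancellation functional, which together quantify precisely how much of the total wave strength crosses any given vertical line during any given time interval. Once this quantitative finite-speed estimate is secured uniformly in $\epsilon$, the remainder of the argument is a matter of combining the spatial and temporal BV bounds.
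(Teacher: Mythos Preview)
The paper does not prove this theorem: it is quoted from \cite{BB} and used as a black box, so there is no argument in the paper to compare your proposal against. I comment only on the soundness of your plan.

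Your overall strategy---localize by finite speed of propagation, prove the bound uniformly for the approximations, pass to the limit---is the right one. But the horizontal/vertical decomposition is more work than needed, and your vertical step has a genuine gap. You assert that the waves crossing $\{x=x'\}$ all ``originate from $]a,b[$ at time $\tau$''; this is false for front tracking, since fronts are also \emph{created} at interaction points inside the triangle, and a front of a family whose speed interval straddles zero may even cross a fixed vertical line more than once as its speed changes across interactions. Bounding the total strength of fronts that cross $\{x=x'\}$ therefore genuinely requires the Glimm interaction potential (the total new wave strength produced by interactions is $\mathcal O(1)\Q(\tau)$), not just the cone property. You allude to this later (``backed by the interaction functional''), so the argument is reparable, but as written the key sentence is incorrect.

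There is a much shorter route once you have localized, and it is essentially what \cite{BB} and Bressan's book do. After replacing $u^\nu(\tau,\cdot)$ by the extension $\tilde u_\tau$ that is constant outside $]a,b[$, the solution at every later time $t$ is still equal to the constant $\tilde u_\tau(a{-})$ for $x$ sufficiently far to the left (by finite speed). Hence for any $(t,x)$ in the triangle,
\[
\big|u^\nu(t,x)-\tilde u_\tau(a{-})\big|\ \leq\ \mathrm{Tot.Var.}\big(u^\nu(t,\cdot);\R\big)\ \leq\ C\,\mathrm{Tot.Var.}(\tilde u_\tau)\ \leq\ C\,\mathrm{Tot.Var.}\{u(\tau);\,]a,b[\},
\]
using only the uniform spatial BV bound. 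Applying this at $(t,x)$ and at $(t',x')$ and using the triangle inequality gives the oscillation estimate with $C'=2C$ immediately---no temporal-trace estimate, wave-counting, or weighted $L^1$ machinery is needed. Your approach would ultimately succeed, but it is a detour around a two-line argument.
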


Adapting the proof of the scalar case, we can prove the main Theorem \ref{t2} of this paper: the proof of this theorem will be done in Section \ref{s:sii}.

\section{Review of wave-front tracking approximation for general system}
\label{s:ftm}

To prove Theorem \ref{t:me}, we use the front tracking approximation in \cite{AM} which extends the one in \cite{Bre} to the general systems. Since the construction is now standard, we only give a short overview about existence, compactness and convergence of the approximation, pointing to the properties needed in our argument: more precisely, we will only consider how one construct the approximate wave pattern of the $k$-th genuinely nonlinear family (Section \ref{Ss_k_gnl}).

The main point is that, for general systems, the accurate/simplified/crude Riemann solvers for the $k$-th wave coincides with the approximate/simplified/crude Riemann solvers when all families are genuinely nonlinear (see below for the definition of accurate/simplified/crude Riemann solvers). This means that the wave pattern pf the $k$-th genuinely nonlinear family will have the same structure as if all other families are genuinely nonlinear: by this, we mean that shock-shock interaction generates shocks, the jump in characteristic speed across $k$-th waves is proportional to their size, and one can thus use the $k$-component of the derivative of $\lambda_k$ \eqref{E_i_th_comp_lambda} to measure the total variation of $v_k$.

\subsection{Description of front tracking approximation}
\label{Ss_wavefront}
Front tracking approximation is an algorithm which produces  piecewise constant approximate solutions to the Cauchy problem \eqref{e:basic3}. Roughly speaking, we first choose a piecewise constant function $u^\epsilon_0$ which is a good approximation to initial data $u_0$ such that
\begin{equation}\label{initial approx}
\mathrm{Tot.Var.}\{u^\epsilon_0\}\leq \mathrm{Tot.Var.}\{u_0\}, \quad ||u^\epsilon_0-u_0||_{L^1}<\epsilon,
\end{equation}
and $u^\epsilon_0$ only has finite jumps.  Let $x_1<\dots<x_m$ be the jump points of $u^\epsilon_0$. For each $\alpha=1,\dots,m$, we approximately solve the Riemann problem (see section \ref{RP}, just shifting the center from $(0,0)$ to $(0,x_\alpha)$) with the initial data given by the jump $[u^\epsilon_0(x_\alpha-),[u^\epsilon_0(x_\alpha+)]$ by a function $w(t,x)=\phi(\frac{x-x_0}{t-t_0})$ where $\phi$ is a piecewise constant function. The straight lines where the discontinuities locate are called $wave$-$fronts$ (or just \emph{fronts} for short). The wave-fronts can prolong until they interact with other fronts, then at the interaction point, the corresponding Riemann problem is approximately solved and several new fronts are generated forward. Then one tracks the wave-fronts until they interact with other wave-fronts, etc... In order to avoid the algorithm to produce infinite many wave-fronts in finite time, different kinds of approximate Riemann solvers should be introduced.

\subsubsection{Approximate Riemann solver}

There are two kinds of approximate Riemann solvers defined for interactions between two physical wave-fronts.
Suppose at the point $(t_1,x_1)$, a wave-front of size $s'$ belonging to $k'$-th family interacts from the left with a wave-front of size $s''$ belonging to $k''$-th family for some $k',\ k''\in \{1,\cdots,N\}$ such that
\[
u^M=T^{k'}_{s'}[u^\mathrm{L}],\qquad u^\mathrm{R}=T^{k''}_{s''}[u^M].
\]
 Assuming that $|u^\mathrm{L}-u^\mathrm{R}|$ sufficiently small.
At the interaction point, the Riemann problem with the initial data data $[u^\mathrm{L},u^\mathrm{R}]$ will be solved by approximate Riemann solver. 

\begin{itemize}
 \item\emph{Accurate Riemann Solver} replaces each elementary composite wave of the exact Riemann solution (refers to $u_k$ in \eqref{e:riemann solution}) by an approximate elementary wave  which is
 a finite collection of jumps traveling with a speed given by the average speed $\tilde \lambda_k$ given by\eqref{d:l}, and the wave opening (i.e. the difference in speeds between any two consecutive fronts) is less than some small parameter $\epsilon$ controlling the
 accuracy of the approximation. 
 
\vspace{6pt}
 
 \item\emph{Simplified Riemann Solver} only generates approximate elementary waves belong to $k'$-th and $k''$-th families with corresponding size $s'$ and $s''$ as the incoming ones if $k'\ne k''$, and approximate elementary waves of size $s'+s''$ belong to $k'$-th
 family if $k'= k''$. The simplified Riemann solver collects the remaining new waves into a single \emph{nonphysical front}, traveling with a constant speed $\hat{\lambda}$, strictly larger than all characteristic speed $\hat{\lambda}$. Therefore, usually the simplified Riemann solver generate less outgoing fronds after interaction than the accurate Riemann solver.
\end{itemize}

Since the simplified Riemann solver produces nonphysical wave-fronts and they can not interact with each other, one only needs a approximate Riemann solver defined for the  interaction between, for example, a physical front of the $k$-th family with size $s$, connecting $u^M$, $u^\mathrm{R}$ and a nonphysical front (coming from the left)  connecting the left value $u^\mathrm{L}$ and $u^M$ traveling with speed $\hat{\lambda}$.
\begin{itemize}
\item\emph{Crude Riemann Solver} generates a $k$-th front connecting $u^\mathrm{L}$ and $\tilde{u}^M=T^k_s[u^\mathrm{L}]$ traveling with speed $\tilde{\lambda_i}$ and a nonphysical wave-front joining $\tilde{u}^M$ and $u^\mathrm{R}$, traveling with speed $\hat{\lambda}$.  In the following, for simplicity, we just say that the non-physical fronts belong to the $(N+1)$-th characteristic field. 
\end{itemize}

\begin{remark}
We can assume that at each time $t>0$, at most one interaction takes place, involving exactly two incoming fronts, because we can slightly change the speed of one of the incoming fronts if more than two fronts meet at the same point. It is sufficient to require that the error vanishes when $\epsilon \to 0$. \\
To simplify the analysis, we assume that the fronts satisfy the Rankine-Hugoniot conditions exactly.
\end{remark}

\subsubsection{The approximate Riemann solvers for genuinely nonlinear waves}
\label{Ss_k_gnl}

If the $k$-th characteristic family is genuinely nonlinear, the elementary wave $u_k$ is either a shock wave or a rarefaction wave. The key example of the accurate Riemann solver is thus to consider how these two solutions are approximated.

If $k$-th elementary wave $u_k$ in \eqref{e:riemann solution} is just a single shock, for example
\[
 u_k=\begin{cases}
      u^\mathrm{L}\quad &x/t<\sigma,\\
      u^\mathrm{R}\quad &x/t>\sigma,
     \end{cases}
\]
where $\sigma$ is the speed of shock wave, then the approximated $k$-th wave coincides the exact one (apart from the speed in case, see the above remark).

If $u_k$ is a rarefaction wave of the $k$-th family connecting the left value $u^\mathrm{L}$ and the right value $u^\mathrm{R}$, for example, if $u^\mathrm{R}:=T_{s}^k[u^\mathrm{L}]$ and
\[
  u_k=\begin{cases}
      u^\mathrm{L}\quad &x/t<\lambda_i(u^\mathrm{L}),\\
      T_{s^*}^k[u^\mathrm{L}] &x/t\in[\lambda_i(u^\mathrm{L}),\lambda_i(u^\mathrm{R})],\ x/t=\lambda_i(T_{s^*}^k[u^\mathrm{L}]),\\
      u^\mathrm{R}\quad &x/t>\lambda_i(u^\mathrm{R}),
     \end{cases}
\]
where $s^*\in[0,s]$. Then the approximation $\tilde{u}_k$ is a rarefaction fan containing several rarefaction fronts. More precisely, we can choose real numbers $0=s_0<s_1<\dots<s_n=s$, and define the points $w_i:=T_{s_i}^k[u^\mathrm{L}]$, $i=0,\dots,n$, with the following properties,
\begin{eqnarray*}
w_{i+1}=T_{(s_{i+1}-s_i)}^k[w_{i}],
\end{eqnarray*}
and the wave opening of consecutive wave-fronts are sufficiently small, i.e.
\begin{equation*}
\sigma^k_{s}[u^\mathrm{L}](s_{i+1})-\sigma^k_{s}[u^\mathrm{L}](s_i)\leq\epsilon, \quad \forall i=0,\dots,n-1.
\end{equation*}
where the function $\sigma^k_{s}$ is defined in Theorem \ref{t:ec}. We let the jump $[\omega_i,\omega_{i+1}]$ travel with the speed $\tilde{\sigma}_i:=\tilde{\lambda}_k(\omega_i,\omega_{i+1})$ \eqref{d:l}, so that the rarefaction fan $\tilde{u}_k$ becomes 
\begin{equation*}
\tilde{u}_k=\begin{cases}
u^\mathrm{L} & x/t<\tilde{\sigma}_1,\\
\omega_i & \tilde{\sigma}_{i} \leq x/t<\tilde{\sigma}_{i+1}, \ i=1,\dots,n-1,\\
u^\mathrm{R} & x/t\geq\tilde{\sigma}_n.
\end{cases}
\end{equation*}

\subsubsection{Interaction potential and BV estimates}
\label{interaction amount}

Suppose two wave-fronts with size $s'$ and $s''$ interact. In order to get the estimate on the difference between the size of the incoming waves and the size of the outgoing waves produced by the interaction, we need to define the \emph{amount of interaction $\mathcal{I}(s',s'')$} between $s'$ and $s''$.

When $s'$ and $s''$ belong to different characteristic families, set
\[
\mathcal{I}(s',s'')=|s's''|.
\]

If $s'$, $s''$ belong to the same characteristic family, the definition of $\mathcal I(s',s'')$ is more complicated (see Definition 3 in \cite{AM}). We just mention that if $s'$, $s''$ are the sizes of two shocks which have the same sign, traveling with the speed $\sigma'$ and $\sigma''$ respectively, then the Amount of Interaction takes the form
\begin{equation}\label{d:ai}
\mathcal{I}(s',s'')=|s's''|\big{|}\sigma'-\sigma''\big{|},
\end{equation}
i.e. the product of the size of the waves times the difference of their speeds (of the order of the angle between the two shocks).

To control the Amount of Interaction, the following potential is introduced.

At each time $t>0$ when no interaction occurs, and $u(t,\cdot)$ has jumps at $x_1,\dots,x_m$, we denote by
\[
\omega_1,\dots,\omega_m, \quad s_1,\dots,s_m, \quad i_1,\dots,i_m,
\]
their left states, signed sizes and characteristic families, respectively: the sign of $s_\alpha$ is given by the respective orientation of $dT^k_s[u]/ds$ and $r_k$, if the jump at $x_\alpha$ belongs to the $k$-th family. The Total Variation of $u$ will be computed as
\[
V(t) = V(u(t)) := \sum_{\alpha} \big| s_\alpha \big|.
\]

Following \cite{B2}, we define the \emph{Glimm Wave Interaction Potential} as follows:
\begin{equation}\label{d:gp}
\begin{split}
\Q(t) &= \mathcal{Q}(u(t)) := \sum_{\genfrac{}{}{0pt}{}{i_\alpha>i_\beta}{x_\alpha<x_\beta}} \big| s_\alpha s_\beta \big| + \frac{1}{4} \sum_{i_\alpha=i_\beta<N+1} \int^{|s_\alpha|}_0\int^{|s_\beta|}_0 \big| \sigma^{i_\beta}_{s_\beta}[\omega_\beta](\tau'')-\sigma^{i_\alpha}_{s_\alpha}[\omega_\alpha](\tau') \big| d\tau'd\tau''.
\end{split}
\end{equation}

Denoting the time jumps of the Total Variation and the Glimm Potential as
\[
\Delta V(\tau)=V(\tau+)-V(\tau-),\ \ \Delta \Q(\tau)=\Q(\tau+)-\Q(\tau-),
\] 
the fundamental estimates are the following (Lemma 5 in \cite{AM}): in fact, when two wave-fronts with size $s',\ s''$ interact,
\begin{subequations}
\label{e:gpe_ve}
\begin{equation}
\label{e:gpe}
\Delta\Q(\tau)=-\mathcal{O}(1)\mathcal{I}(s',s''),
\end{equation}
\begin{equation}
\label{e:ve}
\Delta V(\tau)=\mathcal{O}(1)\mathcal{I}(s',s'').
\end{equation}
\end{subequations}
Thus one defines the \emph{Glimm Functional}
\begin{equation}
\label{e:glimm_funct}
\Upsilon(t) := V(t) + C_0 \Q(t)
\end{equation}
with $C_0$ suitable constant, so that $\Upsilon$ decreases at any interaction. Using this functional, one can prove that $\epsilon$-approximate solutions exist and their total variations are uniformly bounded (see section 6.1 of \cite{AM}).

\subsubsection{Construction of the approximate solutions and their convergence to exact solution}

The construction starts at initial time $t=0$ with a given $\epsilon>0$, by taking  $u^\epsilon_0$ as a suitable piecewise constant approximation of initial data $u_0$, satisfying \eqref{initial approx}. At the jump points of $u^\epsilon_0$, we locally solve the Riemann problem by accurate Riemann solver. The approximate solution $u^\epsilon$ then can be prolonged until a first time $t_1$ when two wave-fronts interact. Again we solve the Riemann problem at the interaction point by an approximate Riemann solver. Whenever the amount of interaction (see Section \ref{interaction amount} for the definition) of the incoming waves is larger than some threshold parameter $\rho = \rho(\epsilon) > 0$, we shall adopt the accurate Riemann solver. Instead, in the case where the amount of interaction of the incoming waves is less than $\rho$, we shall adopt two different types of simplified Riemann solvers. And we will apply the crude Riemann solver if one of the incoming wave-front is non-physical front. One can show that the number of wave-fronts in approximate solution constructed by such algorithm remains finite for all times (see Section 6.2 in \cite{AM}).

We call such approximate solutions \emph{$\epsilon$-approximate front tracking solutions}. At each time $t$ when there is no interaction, the restriction $u^\epsilon(t)$ is a step function whose jumps are located along straight lines in the $(t,x)$-plane. 

Let $\{\epsilon_\nu\}^\infty_{\nu=1}$ be a sequence of positive real numbers converging to zero. Consider a corresponding sequence of $\epsilon_\nu$-approximate front tracking solutions $u^\nu:=u^{\epsilon_\nu}$ of \eqref{e:basic3}: it is standard to show that the functions $t\mapsto u^\nu(t,\cdot)$ are uniformly Lipschitz continuous in $L^1$ norm, and the decay of the Glimm Functional yields that the solutions $u^\nu(t,\cdot)$ have uniformly bounded total variation. Then by Helly's theorem, $u^\nu$ converges up to a subsequence in $\mathbb{L}^1_{\mathrm{loc}}(R^+\times\R)$ to some function $u$, which is a weak solution of \eqref{e:basic3}.

It can be shown that by the choice of the Riemann Solver in Theorem \ref{t:ec}, the solution obtained by the front tracking approximation coincides with the unique vanishing viscosity solution \cite{BB}. Furthermore, there exists a closed domain $\D\subset L^1(\R,\Omega)$ and a unique distributional solution $u$, which is a Lipschitz semigroup $\D\times[0,+\infty[\rightarrow \D$ and which for piecewise constant initial data coincides, for a small time, with the solution of the Cauchy problem obtained piecing together the standard entropy solutions of the Riemann problems. Moreover, it lives in the space of BV functions.
  
For simplicity, the pointwise value of $u$ is its $L^1$ representative such that the restriction map $t\mapsto u(t)$ is continuous form the right in $L^1$ and $x \mapsto u(t,x)$ is right continuous from the right.

\subsubsection{Further estimates}

%
To each $u^\nu$, we define the \emph{measure $\mu^\mathrm{I}_\nu$ of interaction} and the \emph{measure $\mu^\mathrm{IC}_\nu$ of interaction and cancellation} concentrated on the set of interaction points as follows. If two physical front fronts belonging to the families $i,i'\in\{1,\dots,N\}$ with size $s',\ s''$ interact at point $P$, we denote
\begin{equation*}
\mu^\mathrm{I}_\nu(\{P\}):=\mathcal{I}(s',s''),
\end{equation*}

\begin{equation*}
\mu^\mathrm{IC}_\nu(\{P\}) \;:=\mathcal{I}(s',s'')+\;
\left\{\begin{array}{ll}
|s'|+|s''|-|s'+s''| & \mbox{$i=i'$},\\
0 & \mbox{$i\neq i'$}.
\end{array}\right.
\end{equation*}

The wave size estimates (Lemma 1 in \cite{AM}) yields balance principles for the wave size of approximate solution. \\
More precisely, given a polygonal region $\Gamma$ with edges transversal to the waves it encounters. Denote by $W^{i\pm}_{\nu,\mathrm{in}}$, $W^{i\pm}_{\nu,\mathrm{out}}$ the positive $(+)$ or negative $(-)$ $i$-th waves in $u^\nu$ entering or exiting $\Gamma$, and let $W^i_{\nu,\mathrm{in}}=W^{i+}_{\nu,\mathrm{in}}-W^{i-}_{\nu,\mathrm{in}}$, $W^i_{\nu,\mathrm{out}}=W^{i+}_{\nu,\mathrm{out}}-W^{i-}_{\nu,\mathrm{out}}$. Then the measure of interaction and the measure of interaction-cancellation control the difference between the amount of exiting $i$-th waves and the amount of entering $i$-th waves w.r.t. the region as follows:
\begin{subequations}
\label{e:bl_bl_1}
\begin{equation}
\label{e:bl}
|W^i_{\nu,\mathrm{out}}-W^i_{\nu,\mathrm{in}}|\leq\mathcal{O}(1)\mu^\mathrm{I}_\nu(\Gamma),
\end{equation}
\begin{equation}
\label{e:bl_1}
|W^{i\pm}_{\nu,\mathrm{out}}-W^{i\pm}_{\nu,\mathrm{in}}|\leq\mathcal{O}(1)\mu^\mathrm{IC}_\nu(\Gamma).
\end{equation}
\end{subequations}
The above estimates are fairly easy consequence of the interaction estimates \eqref{e:gpe_ve} and the definition of $\mu^\mathrm{I}_\nu$, $\mu^\mathrm{IC}_\nu$.

By taking a subsequence and using the weak compactness of bounded measures, there exit measures $\mu^{I}$ and $\mu^\mathrm{IC}$ on $\R^+\times\R$ such that the following weak convergence holds:
\begin{equation}\label{def of muic}
\mu^\mathrm{I}_{\nu}\rightharpoonup\mu^\mathrm{I}, \quad \mu^\mathrm{IC}_\nu\rightharpoonup\mu^\mathrm{IC}.
\end{equation}

\subsection{\texorpdfstring{Jump part of $i$-th waves}{Jump part of i-th waves}}
\label{Ss_decay}

The derivative of $u^\nu$ is clearly concentrated on polygonal lines, being a piecewise constant function with discontinuities along lines. To select the fronts of $u^\nu$ converging to the jump part of $u$, we use the following definition.

\begin{definition}[Maximal $(\epsilon^0,\epsilon^1)$-shock front]\cite{Bre}
A maximal $(\epsilon^0,\epsilon^1)$-shock front for the $i$-th family of an $\epsilon_\nu$-approximate 
front-tracking solution $u^\nu$ is any maximal (w.r.t. inclusion) polygonal line
$(t,\gamma^\nu(t))$ in the $(t,x)$-plane, $t_0\leq t \leq t_1$, satisfying:
\begin{itemize}
\item[(i)] the segments of $\gamma^\nu$ are i-shocks of $u^\nu$ with size $|s^\nu|\geq \epsilon^0$, and at least once $|s^\nu|\geq \epsilon^1$;
\item[(ii)] the  nodes are interaction points of $u^\nu$;
\item[(iii)] it is on the left of any other polygonal line which it intersects and which have the above two properties.
\end{itemize}
\end{definition}

Let $M^{\nu,i}_{(\epsilon^0,\epsilon^1)}$ be the number of maximal $(\epsilon^0,\epsilon^1)$-shock front for the $i$-th family. Denote 
\begin{equation*}
\gamma^{\nu,i}_{(\epsilon^0,\epsilon^1),m}:[t^{\nu,i,-}_{(\epsilon^0,\epsilon^1),m},t^{\nu,i,+}_{(\epsilon^0,\epsilon^1),m}]\rightarrow\R, \quad m=1,\dots,M^{\nu,i}_{(\epsilon^0,\epsilon^1)},
\end{equation*}
as the maximal $(\epsilon^0,\epsilon^1)$-shock fronts for the $i$-th family in $u^\nu$. Up to a subsequence, we can assume that $M^{\nu,i}_{(\epsilon^0,\epsilon^1)} = \bar{M}^i_{(\epsilon^0,\epsilon^1)}$ is a constant independent of $\nu$ because the total variations of $u^\nu$ are bounded.

Consider the collection of all maximal $(\epsilon^0,\epsilon^1)$-shocks for the $i$-th family and define
\begin{equation*}
\mathscr{T}^{\nu,i}_{(\epsilon^0,\epsilon^1)}=\bigcup^{\bar M^i_{(\epsilon^0,\epsilon^1)}}_{m=1} \mathrm{Graph}(\gamma^{\nu,i}_{(\epsilon^0,\epsilon^1),m}),
\end{equation*}
and let $\{\epsilon_k^0\}_{k\in\N}$, $\{\epsilon^1_k\}_{k\in\N}$ be two sequences satisfying $0 < 2^k \epsilon^0_k \leq \epsilon^1_k \searrow 0$.

Up to a diagonal argument and by a suitable labeling of the curves, one can assume that for each fixed $k$, $m$ the Lipschitz curves $\gamma^{\nu,i}_{(\epsilon^0_k,\epsilon^1_k),m}$ converge uniformly to a Lipschitz curve $\gamma^i_{(\epsilon^0_k,\epsilon^1_k),m}$. 
Let
\begin{equation*}
\mathscr{T}^i := \bigcup_{m,k}\mathrm{Graph}(\gamma^i_{(\epsilon^0_k,\epsilon^1_k),m}).
\end{equation*}
denote the collection of all these limiting curves in $u$.

For fixed $(\epsilon^0,\epsilon^1)$, we write for shortness
\begin{equation}\label{d:lnu}
\tilde{l}^\nu_i(t,x):=\tilde{l}_i(u^\nu(t,x-),u^\nu(t,x+))
\end{equation}
and define
\begin{equation}\label{d:vnujump}
v^{\nu,\mathrm{jump}}_{i,(\epsilon^0,\epsilon^1)}:=\tilde{l}^\nu_i\cdot {u^\nu_x\llcorner}_{\T^{\nu,i}_{(\epsilon^0,\epsilon^1)}}.
\end{equation}

Following the same idea of the proof of Theorem 10.4 in \cite{Bre}, the next lemma holds if only the $i$-th characteristic field is genuinely nonlinear.

\begin{lemma}
\label{l:is}
The jump part of $v_i$ is concentrated on $\T^i$.

Moreover there exists a countable set $\Theta \subset \R^+ \times \R$, such that for each point
\[
P=(\tau,\xi)=(\tau,\gamma^i_m(\tau))\notin\Theta
\]
where $i$-th shock curve $\gamma^i_m$ is approximated by the sequence of $(\epsilon^0,\epsilon^1)$-shock fronts $\gamma^{\nu,i}_{(\epsilon^0,\epsilon^1),m}$ of the approximate solutions $u^\nu$, the following holds
\begin{subequations}
\label{e:left_right_lim}
\begin{equation}
\label{e:leftlim}
\lim_{r \rightarrow 0+} \limsup_{\nu \rightarrow \infty} \left( \sup_{\genfrac{}{}{0pt}{}{x<\gamma^{\nu,i}_{(\epsilon^0,\epsilon^1),m}(t)}{(t,x)\in B(P,r)}} \big| u^\nu(t,x) - u^- \big| \right) = 0,
\end{equation}
\begin{equation}
\label{e:rightlim}
\lim_{r \rightarrow 0+} \limsup_{\nu \rightarrow \infty} \left( \sup_{\genfrac{}{}{0pt}{}{x>\gamma^{\nu,i}_{(\epsilon^0,\epsilon^1),m}(t)} {(t,x)\in B(P,r)}} \big| u^\nu(t,x) - u^+ \big| \right) = 0.
\end{equation}
\end{subequations}
Moreover, we can choose a sequence $\{\nu_k\}_{k=1}^\infty$ such that
\begin{equation}\label{converge of vijump}
v^\mathrm{jump}_i = \mathrm{weak}^*\mathrm{-}\lim_k\sum^N_{i=1}v^{\nu_k,\mathrm{jump}}_{i,(\epsilon^0_k,\epsilon^1_k)}.
\end{equation}
\end{lemma}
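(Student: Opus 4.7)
The plan is to adapt the proof of Theorem 10.4 of \cite{Bre} to the present setting. The key observation, already stressed in the paper, is that the wave pattern of the genuinely nonlinear $i$-th family in a general strictly hyperbolic system has the same qualitative structure as in the fully genuinely nonlinear case: $i$-shock/$i$-shock interactions produce $i$-shocks of comparable size up to small secondary waves, and the interaction-cancellation estimates \eqref{e:bl_bl_1} hold verbatim. No nonlinearity assumption on the other families enters the argument.

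First I would fix $(\epsilon^0, \epsilon^1)$ and observe that every maximal $(\epsilon^0,\epsilon^1)$-shock front must at some instant carry a jump of size at least $\epsilon^1$; the uniform bound on the Glimm functional $\Upsilon$ therefore forces $M^{\nu,i}_{(\epsilon^0,\epsilon^1)} \leq \mathcal{O}(1)/\epsilon^1$. After passing to a subsequence this count is constant in $\nu$, and since each curve is Lipschitz with constant $\check{\lambda}_i$, an Ascoli--Arzel\`a argument produces uniform limits $\gamma^{\nu,i}_{(\epsilon^0,\epsilon^1),m} \to \gamma^i_{(\epsilon^0,\epsilon^1),m}$. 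A diagonal extraction over the pairs $\{(\epsilon^0_k, \epsilon^1_k)\}$ defines $\mathscr{T}^i$.

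Next I would define $\Theta$ as the countable set consisting of: the atoms of $\mu^\mathrm{IC}$, the times of nontrivial jump of the total variation $V(t)$, and the points where two limit curves $\gamma^i_m,\gamma^i_{m'}$ meet. For $P=(\tau,\gamma^i_m(\tau))\notin\Theta$ the one-sided limits \eqref{e:leftlim}--\eqref{e:rightlim} follow by combining the Tame Oscillation bound (Theorem \ref{t:to}) with the cancellation estimate \eqref{e:bl_1}: in a small one-sided triangular neighborhood of the approximate shock $\gamma^{\nu,i}_{(\epsilon^0,\epsilon^1),m}$, the oscillation of $u^\nu$ is controlled by the $\mu^\mathrm{IC}_\nu$-mass of the region plus boundary contributions whose total variation shrinks with $r$, and both vanish as $\nu\to\infty$ and then $r\to 0^+$. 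Finally, choosing $\epsilon^0_k,\epsilon^1_k\searrow 0$ and a diagonal subsequence $\nu_k$ along which $v^{\nu_k,\mathrm{jump}}_{i,(\epsilon^0_k,\epsilon^1_k)}$ converges weakly$^*$, one bounds the discrepancy with the total $i$-th wave measure on $\mathscr{T}^i$ by the sum of $i$-shocks of size in $[\epsilon^0_k,\epsilon^1_k]$ plus the continuous part; the former vanishes as $k\to\infty$ by \eqref{e:bl_1}, yielding \eqref{converge of vijump} and hence concentration of $v^\mathrm{jump}_i$ on $\mathscr{T}^i$.

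The technically delicate step will be verifying that $\Theta$ is countable. One must exclude not merely the interaction atoms but also those $(\tau,\xi)$ at which adjacent approximating $i$-shocks cluster and partially cancel, leaving a residual oscillation visible in the limit; handling these requires applying the balance principle \eqref{e:bl_1} on carefully chosen shrinking polygonal regions and exploiting the finiteness of $\mu^\mathrm{IC}$. It is precisely here that the genuine nonlinearity of the $i$-th family is essential, since only for a GNL family does the signed size of an $i$-shock control, to first order, the jump $\lambda_i(u^+)-\lambda_i(u^-)$ across it, which is what allows the interaction-cancellation measure to dominate the residual oscillation on each side.
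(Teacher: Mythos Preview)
Your proposal is correct and follows essentially the same route as the paper: both adapt the machinery of Theorem 10.4 in \cite{Bre}, using that the $i$-th genuinely nonlinear family behaves in the general system exactly as in the fully GNL case, so that the interaction--cancellation measure $\mu^\mathrm{IC}$ controls the residual oscillation on either side of an approximating $(\epsilon^0,\epsilon^1)$-shock. The paper's own proof is only a sketch that points to \cite{Bre}; your outline is in fact more explicit, and your definition of $\Theta$ (atoms of $\mu^\mathrm{IC}$, jumps of $V$, meeting points of limit curves) differs only cosmetically from the paper's choice (atoms of $\mu^\mathrm{IC}$ and initial jump points). One small remark: the countability of $\Theta$ as you define it is immediate; what you correctly identify as delicate is rather the converse inclusion, namely that every point where \eqref{e:leftlim}--\eqref{e:rightlim} fails must be an atom of $\mu^\mathrm{IC}$, and this is exactly where the balance \eqref{e:bl_1} on shrinking polygons and the GNL control of $\lambda_i$-jumps by signed wave size enter.
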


The key argument of the proof is that we can use the tools of the proof of Theorem 10.4 in \cite{Bre} because the wave structure of the $i$-th genuinely nonlinear family has the following properties:
\begin{enumerate}
\item the interaction among two shocks of the $i$-th family generates only one shock of the $k$-th family,
\item the strength of $i$-th waves can be measured by the jump of the $i$-th characteristic speed $\lambda_i$,
\item the speed of $i$-th waves is very close to the average of the jump of $\lambda_i$ across the discontinuity.
\end{enumerate}
These properties are a direct consequence of the behavior of the approximate Riemann solvers on the $i$-th waves, if the $i$-th family is genuinely nonlinear (Section \ref{Ss_k_gnl}).

\begin{proof}[Sketch of the proof]
Let $\Theta$ be the set defined by all jump points of the initial datum, the atoms of $\mu^\mathrm{IC}$ (see \eqref{def of muic}). 

For any point $P\in \T^i\setminus\Theta$, if \eqref{e:leftlim} or \eqref{e:rightlim} does not hold, then this means that the approximate solutions $u^\nu$ has some uniform oscillation: indeed, because of the $L^1$-convergence in $\R^2$, one can find points $(t,x)$ arbitrarily close to $P$ such that $u^\nu(t,x) \to u(t,x)$, and the fact that the limits are not $0$ means that there are other points $(t',x')$ arbitrarily close to $P$ such that $|u^\nu(t',x') - u(t,x)|$, at least for a subsequence of $\nu$. The analysis of the proof of Theorem 10.4 in \cite{Bre} shows that in this case the Amount of Interaction and Cancellation is uniformly positive in every neighborhood of $P$, and thus $P$ is an atom of $\mu^\mathrm{IC}$, contradicting to $P\notin \Theta$.

For $P\notin \T^i \cup \Theta$, if $v^\mathrm{jump}_i(P)>0$, i.e. $P$ is a jump point of $u$, by the similar argument of Step 8 in the proof of Theorem 10.4 in \cite{Bre} this shows that the waves present in the approximate solutions are canceled, and thus $\mu^\mathrm{IC}(P)>0$. It is impossible since $P\notin\Theta$. This concludes that $v^\mathrm{jump}_i$ is concentrated on $\T^i$, because by \eqref{e:left_right_lim} the jumps in the approximate solutions are vanishing in a neighborhood of every $P \notin \T^i \cup \Theta$.

We are left with the proof of \eqref{converge of vijump}. At jump point $(t,\gamma^i_{(\epsilon^0,\epsilon^1),m}(t)) \in \T^i\setminus\Theta$, according to \eqref{e:leftlim}, \eqref{e:rightlim}, there exist a sequence $(t^\nu,\gamma^{\nu,i}_{(\epsilon^0,\epsilon^1),m}(t)(t^\nu))$ such that 
\begin{equation}
\label{convgoflefteigen}
\big( t,\gamma^{i}_{(\epsilon^0,\epsilon^1),m}(t) \big) = \lim_{\nu \to \infty} \big( t^\nu,\gamma^{\nu,i}_{(\epsilon^0,\epsilon^1),m}(t)(t^\nu) \big)
\end{equation}
and its left and right values converges to the left and right values of the jump in $(t,\gamma^i_{(\epsilon^0,\epsilon^1),m}(t))$.

Since $f\in C^2$, by the definition \eqref{average matrix} the matrix $A(u^\mathrm{L},u^\mathrm{R})$ depends continuously on the value $(u^\mathrm{L},u^\mathrm{R})$, and since its eigenvalues are uniformly separated the same continuity holds for its eigenvalues $\tilde \lambda_k(u^\mathrm{L},u^\mathrm{R})$, left eigenvectors $\tilde l_k(u^\mathrm{L},u^\mathrm{R})$ and right eigenvectors $\tilde r_k(u^\mathrm{L},u^\mathrm{R})$. Using the notation \eqref{d:l} and \eqref{d:lnu}, one obtains
\begin{equation}
\label{convg of left eigen}
\tilde{l}_i \big( t,\gamma^{i}_{(\epsilon^0,\epsilon^1),m}(t) \big)=\lim_\nu \tilde{l}^\nu_i \big( t^\nu,\gamma^{\nu,i}_{(\epsilon^0,\epsilon^1),m}(t^\nu) \big),
\end{equation}
and similar limits holds for $\tilde r_i$, $\tilde \lambda_i$.

Up to a subsequence $\{\nu_k\}$, from the convergence of the graphs of $\mathscr{T}^{\nu_k,i}_{(\epsilon_k^0,\epsilon_k^1)}$ to $\T^{i}$ and \eqref{e:leftlim}, \eqref{e:rightlim}, it is fairly easy to prove that
\begin{equation}
\label{convg of uxjump}
D u \llcorner_{\T^{i}} = \lim_{k \to \infty} D u^\nu \llcorner_{\T^{\nu_k,i}_{(\epsilon_k^0,\epsilon_k^1)}}.
\end{equation}
According to  \eqref{d:vnujump}, \eqref{convg of left eigen} and \eqref{convg of uxjump}, one concludes the weak convergence of $v^{\nu_k,\mathrm{jump}}_{i,(\epsilon_k^0,\epsilon_k^1)}$ to $v^\mathrm{jump}_i$. 
\end{proof}

\section{Proof of Theorem \ref{t:me}}
\label{s:me}

\subsection{Decay estimate for positive waves}

The Glimm Functional for BV functions to general systems has been obtained in \cite{B2}, and when $u$ is piecewise constant, it reduced to \eqref{d:gp}: and we will write it as $\Q$ also the formulation of the functional given in \cite{B2}. Moreover, for the same constant $C_0>0$ of the Glimm Functional $\Upsilon(t)$ \eqref{e:glimm_funct}, the sum $\mathrm{Tot.Var.}(u)+C_0\Q(u)$ is lower semi-continuous w.r.t the $L^1$ norm (see Theorem 10.1 of \cite{Bre}).

For any Radon measure $\mu$, we denote $[\mu]^+$ and $[\mu]^-$ as the positive and negative part of $\mu$ according to Hahn-Jordan decomposition. The same proof of the decay of the Glimm Functional $\Upsilon(t)$ yields that for every finite union of the open intervals $J=I_1\cup\dots\cup I_m$
\begin{equation}
\label{e:lsc_glimm}
[v_i]^\pm(J)+C_0\Q(u) \leq \liminf_{\nu\rightarrow\infty}\left\{[v^\nu_i]^\pm(J)+C_0\Q(u^\nu)\right\}, \quad  i=1,\dots,\ n,
\end{equation}
as $u^\nu\rightarrow u$ in $L^1$. 

In \cite{Bre,MR1632980} the authors prove a decay estimate for positive part of the $i$-th wave measure under the assumption that $i$-th characteristic field is genuinely nonlinear and the other characteristic fields are either genuinely nonlinear or linearly degenerate. By inspection, one can verify that the proof also works (with a little modification) under no assumptions on the nonlinearity on the other characteristic fields, since the essential requirements of strict hyperbolicity and of the controllability of interaction amounts by Glimm Potential still hold: the main variation is that one should replace the original Glimm Potential in \cite{Bre} with the generalized one given in \cite{B2}.

We thus state the following theorem, which is the analog of Theorem 10.3 in \cite{Bre}.

\begin{theorem}
\label{t:bde}
Let the system \eqref{e:basic} be strictly hyperbolic and the $i$-th characteristic field be genuinely non-linear. Then there exists a constant $C''$ such that, for every $0\leq s<t$ and every solution $u$ with small total variation obtained as the limit of wave-front tracking approximation, the measure $[v_i(t)]^+$ satisfies
\begin{equation}
\label{vibde}
[v_i(t)]^+(B)\leq C''\left\{\frac{\mathcal{L}^1}{t-s}(B)+[\Q(s)-\Q(t)]\right\}
\end{equation}
for every $B$ Borel set in $\R$.
\end{theorem}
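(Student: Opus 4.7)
The strategy is to establish the estimate first at the level of the wave-front tracking approximations $u^\nu$ and then pass to the limit. More precisely, the plan is to show that there exists a constant $C''$, independent of $\nu$, such that for every finite union of open intervals $J = I_1 \cup \dots \cup I_m$ one has
\[
[v^\nu_i(t)]^+(J) \leq C'' \bigg\{ \frac{\mathcal{L}^1(J)}{t-s} + \big[ \Q(u^\nu(s)) - \Q(u^\nu(t)) \big] \bigg\}.
\]
The estimate for general Borel sets $B$ then follows by outer regularity of the positive Radon measure $[v^\nu_i(t)]^+$ and the Lebesgue measure. To pass from the approximation to the limit, I would use the lower semi-continuity of $[v_i]^+(J) + C_0 \Q(u)$ given in \eqref{e:lsc_glimm} for the left-hand side, while for the right-hand side the $L^1$-convergence of $u^\nu \to u$ together with the fact that $\Q$ is lower semi-continuous (so that $-\Q$ and hence $\Q(s) - \Q(t)$ behave in the right direction once the $\Q(s)$ terms cancel with those in \eqref{e:lsc_glimm}) yields the desired inequality at the level of $u$.

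The heart of the matter is the approximate estimate, which is the content of Theorem 10.3 of \cite{Bre} in the fully genuinely nonlinear/linearly degenerate setting. The main step is an Oleinik-type spreading argument: for each positive $i$-rarefaction front in $u^\nu(t) \llcorner_J$ of size $\sigma > 0$, since the $i$-th family is genuinely nonlinear one has $\lambda_i(u^+) - \lambda_i(u^-) \approx \sigma$ by the normalization \eqref{e:gennon_orient}; adjacent $i$-rarefaction fronts, after they have been created (either at time $s$ or at an interaction point in between), must travel at speeds separated by this amount, so they spread apart in space at a rate proportional to their sizes. Summing up over all $i$-rarefaction fronts in $J$ surviving from time $s$ to time $t$ without creation yields the bound $\O(1)\mathcal{L}^1(J)/(t-s)$. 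Positive $i$-waves created by interactions in the time interval $[s,t]$ are controlled by the measure of interaction-cancellation via \eqref{e:bl_1}, which in turn is controlled by $\Q(u^\nu(s)) - \Q(u^\nu(t))$ by \eqref{e:gpe}.

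The main adaptation with respect to \cite{Bre} is checking that nothing in this argument requires the other characteristic fields to be genuinely nonlinear or linearly degenerate. This relies exactly on the three properties singled out after Lemma \ref{l:is}: (i) interaction of two $i$-shocks produces a single $i$-shock, (ii) the $i$-wave strength is comparable to the jump of $\lambda_i$, and (iii) the $i$-wave speed is close to the average of $\lambda_i$. These hold because the behaviour of the accurate/simplified/crude Riemann solvers on the $i$-th family is identical to the purely genuinely nonlinear case (Section \ref{Ss_k_gnl}). The only other input needed, namely that interactions are controlled by the decay of a Glimm-type potential, is supplied by the generalized functional of \cite{B2} (used in \eqref{d:gp}) together with the interaction estimates \eqref{e:gpe_ve}, which are valid for strictly hyperbolic systems without any further nonlinearity assumptions.

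The main obstacle I anticipate is the bookkeeping needed to make the spreading argument rigorous in the presence of possibly infinitely many non-$i$ families of fronts crossing the $i$-th rarefactions (since those families are neither genuinely nonlinear nor linearly degenerate, a non-$i$ elementary composite wave may contain a countable collection of fronts). This is handled by observing that each such crossing modifies the $i$-wave by an amount controlled by the amount of interaction $\I$, and the total contribution is again absorbed into $\O(1)[\Q(s) - \Q(t)]$ via \eqref{e:bl}--\eqref{e:bl_1}; no single interaction produces a qualitative change in the $i$-wave pattern because of properties (i)--(iii) above.
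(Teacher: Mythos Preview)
Your proposal is correct and follows essentially the same approach as the paper: the paper does not give a self-contained proof of this theorem but simply observes that the proof of Theorem~10.3 in \cite{Bre} (and \cite{MR1632980}) goes through verbatim once one replaces the classical Glimm potential by the generalized one of \cite{B2}, since strict hyperbolicity and the control of interaction amounts by $\Q$ are the only structural inputs needed. Your plan spells out in more detail the spreading mechanism, the passage to the limit via \eqref{e:lsc_glimm}, and the role of properties (i)--(iii), all of which are exactly the ingredients the paper is implicitly invoking.
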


The estimate \eqref{vibde} given half of the bound \eqref{e:me}.

\subsection{Decay estimate for negative waves}
\label{ss_negdec}

To simplify the notation, we omit the index $(\epsilon^0,\epsilon^1)$ in $v^{\nu,\mathrm{jump}}_{i,(\epsilon^0,\epsilon^1)}$ in the rest of the proof. In order to get the uniform estimate for the \emph{continuous part} $v^{\nu,\mathrm{cont}}_{i} := v^\nu_i - v^{\nu,\mathrm{jump}}_{i}$, we need to consider the distributions
\begin{equation*}
\mu^\nu_i:=\partial_t v^\nu_i+\partial_x(\tilde{\lambda}^\nu_i v^\nu_i) ,\ \ \ \ \mu^{\nu,\mathrm{jump}}_{i} := \partial_t v^{\nu,\mathrm{jump}}_{i} + \partial_x(\tilde{\lambda}^{\nu}_i  v^{\nu,\mathrm{jump}}_{i}).
\end{equation*}

\subsubsection{\texorpdfstring{Estimate for $\mu^\nu_i$}{Estimate for the source}}

%
Let $y_m : [\tau^-_m,\tau^+_m] \rightarrow \R$, $m = 1,\dots,L^\nu$, be time-parameterized segments whose graphs are the $i$-th wave-fronts of $u^\nu$ and define
\begin{equation*}
u_m^\mathrm{L}:=u(t,y_m(t)-), \quad u_m^\mathrm{R}=u(t,y_m(t)+), \quad \ t \in ]\tau^-_m,\tau^+_m[. 
\end{equation*}
For any test function $\phi \in C^\infty_c (\R^+ \times \R)$ on obtains 
\begin{equation}\label{e:mu_i_nu_est}
- \int_{\R^+ \times \R} \phi \, d\mu^\nu_i = \sum_{m=1}^{L^\nu} \big[ \phi(\tau^+_m,y_m(\tau^+_m))-\phi(\tau^-_m, y_m(\tau^-_m) \big] \tilde{l}_i \cdot (u_m^\mathrm{R} - u_m^\mathrm{L}).
\end{equation}

For any $m$, since the $i$-th characteristic field is genuinely nonlinear, one has
\[
 |\tilde{l}_i(u^\mathrm{L},u^\mathrm{R})-l_i(u^\mathrm{L})|=\O(1)|u_m^\mathrm{R}-u_m^\mathrm{L}|,
\]
where $u_m^\mathrm{R}=T^i_{s_i}[u_m^\mathrm{L}]$ for some size $s_i$. Then it follows from \eqref{parameter of T} that 
\begin{equation}\label{equiv of size and strength}
 s_i\cong \tilde{l}_i \cdot (u_m^\mathrm{R} - u_m^\mathrm{L}). 
\end{equation}

%
%
%
%
%

Let $\{(t_k,x_k)\}_k$ be the collection of points where the $i$-th fronts interact. 
The computation \eqref{e:mu_i_nu_est} yields that $\mu^\nu_i$ concentrates on the interaction points, i.e.
\begin{equation*}
\mu^\nu_i=\sum_k p_k \delta_{(t_k,x_k)},
\end{equation*}
where $p_k$ is the difference between the strength of the $i$-th waves leaving at $(t_k,x_k)$ and the $i$-th waves arriving at $(t_k,x_k)$. We estimate the quantity $p_k$ depending on the type of interaction:

Since  in \cite{Lau}, it is proved that the total size of nonphysical wave-fronts are of the same order of $\epsilon_\nu$, when decomposing $u^\nu_x$, we only consider the physical fronts. If at $(t_k,x_k)$, two physical fronts with $i$-th component size $s'_i$, $s''_i$ interact and generate an $i$-th wave or a rarefaction fan with total size $s_i = \sum_m s_i^m$, from \eqref{e:mu_i_nu_est} and \eqref{equiv of size and strength}, one has
\begin{equation}\label{interaction of pw}
p_k\cong s_i-s'_i-s''.
\end{equation}
Notice that $s'$ or $s''$ or both may vanish in \eqref{interaction of pw} if one of incoming physical fronts does not belong to the $i$-th family.

%

According to the estimate in \cite{AM} (Lemma 1), the difference of sizes between the incoming and outgoing waves of the same family is controlled by the Amount of Interaction (see Section \ref{interaction amount}), so that one concludes
\begin{equation*}
|\mu^\nu_i|(\{(t_k,x_k)\}) \leq \O(1) \I(s_i,s'_i)
\end{equation*}
and thus
\[
|\mu^\nu_i|(\{t_k\} \times \R) \leq \O(1) \{\Upsilon^\nu(t_k^-)-\Upsilon^\nu(t_k^+)\}.
\]
This yields
\begin{equation}
|\mu^\nu_i|(\R^+\times\R) \leq \O(1) \Upsilon^\nu(0),
\end{equation}
i.e. $|\mu^\nu_i|$ is a finite Radon measure.

\subsubsection{\texorpdfstring{Estimate for $\mu^{\nu,\mathrm{jump}}_{i}$}{Estimate for the jump part}}

Let $\gamma^i_m : [\tau^-_m,\tau^+_m] \to \R$, $m=1,\dots,\bar M^i_{(\epsilon^0,\epsilon^1)}$, be the curves whose graphs are the segments supporting the fronts of $u^\nu$ belonging to $\T^{\nu,i}_{(\epsilon^0,\epsilon^1)}$, and write 
\[
u_m^\mathrm{L} := u \big( t,\gamma^i_m(t)- \big), \quad u_m^\mathrm{R} := u \big( t,\gamma^i_m(t)+ \big), \qquad t \in ]\tau^-_m,\tau^+_m[.
\]
For any test function $\phi\in C^\infty_c(\R^+ \times \R)$ by direct computation one has as in \eqref{e:mu_i_nu_est}
\begin{equation}
-\int_{\R^+ \times \R} \phi\, d\mu^{\nu,\mathrm{jump}}_{i} = \sum_{m=1}^{\bar M^i_{(\epsilon^0,\epsilon^1)}} \big[ \phi(\tau^+_m,y_m(\tau^+_m)) - \phi(\tau^-_m,y_m(\tau^-_m) \big] \tilde{l}_i \cdot (u^\mathrm{R}-u^\mathrm{L}),
\end{equation}
which yields 
\begin{equation*}
\mu^{\nu,\mathrm{jump}}_{i}=\sum_k q_k\delta_{(\tau_k,x_k)},
\end{equation*} 
where $(\tau_k,x_k)$ are the nodes of the jumps in $\T^{\nu,i}_{(\epsilon^0,\epsilon^1)}$ and the quantities $q_k$ can be computed as follows: if the $i$-th incoming waves have sizes $s'$ and $s''$, and the outgoing $i$-th shock has size $s$, then (see \cite{Lau})
\begin{equation}
\label{e:q_k}
q_k \cong
\begin{cases}
-s' & (t_k,x_k) \ \text{terminal point of a front not merging into another front}, \\
s & (t_k,x_k) \ \text{initial point of a maximal front}, \\
s-s'-s'' & (t_k,x_k) \ \text{merging point of two fronts}, \\
s-s' & (t_k,x_k) \ \text{interaction point of a front with waves not belonging to} \ \T^{\nu,i}_{(\epsilon^0,\epsilon^1)}.
\end{cases} 
\end{equation}
Except in the case when $(\tau_k,x_k)$ is the terminal points of the front of  $\mathscr{T}^{\nu,i}_{(\epsilon^0,\epsilon^1)}$ which ends without merging into another, one has by the interaction estimates
\begin{equation*}
q_k\leq \mu_\nu^\mathrm{IC}(\tau_k,x_k).
\end{equation*}
In fact, since $s \leq 0$ on shocks the second case of \eqref{e:q_k} implies $q_k \leq 0$.

Suppose now that $(\tau_k,x_k)$ is a terminal point of an $(\epsilon^0,\epsilon^1)$-shock front $\gamma_m$. By the definition of $(\epsilon^0,\epsilon^1)$-shock, for some $t \leq \tau_k$ the shock front $\gamma_m$ has size $s_0 \leq -\epsilon^1$, and at $(\tau_k,x_k)$ the size $s_1$ of the outgoing $i$-th front must be not less than $-\epsilon^0$ as a result of interaction-cancellation among waves. Hence we obtain
\begin{equation*}
\epsilon^1 - \epsilon^0 \leq |s_0| - |s_1| \leq \O(1) \mu^\mathrm{IC}_\nu(\gamma_k).
\end{equation*}
This yields
\begin{align*}
q_k \cong -s_1+(s_1+q_k) \leq&~ \frac{\epsilon^0}{\epsilon^1-\epsilon^0} (\epsilon^1-\epsilon^0) + \O(1) \mu^\mathrm{I}_\nu(t_k,x_k) 
\leq \frac{\O(1) \epsilon^0}{\epsilon^1-\epsilon^0}\mu^\mathrm{IC}_\nu(\gamma_k)+\O(1)\mu^\mathrm{I}_\nu(t_k,x_k).
\end{align*}
Since the end points correspond to disjoint maximal $i$-th fronts, due to genuinely nonlinearity, it follows that
%
\begin{equation*}
\sum_{(t_k,x_k)\ \text{end point}} q_k \leq \O(1) \mu^\mathrm{IC}_\nu (\R^+ \times \R),
\end{equation*}
so that it is a uniformly bounded measure. We thus conclude that the distribution
\begin{equation*}
\bar{\mu}^\nu :=-\mu^{\nu,\mathrm{jump}}_{i} + \O(1)\mu^\mathrm{IC}_\nu + \sum_{(t_k,x_k)\ \text{end point}} q_k \delta_{(t_k,x_k)}
\end{equation*}
is non-negative, so it is a Radon measure and thus also $\mu^{\nu,\mathrm{jump}}_{i}$ is a Radon measure. 

In order to obtain a lower bound, one considers the Lipschitz continuous test function
\[
\phi_\alpha(t) := \chi_{[0,T+\alpha]}(t) - \frac{t-T}{\alpha} \chi_{[T,T+\alpha]}(t), \quad \alpha > 0,
\]
which is allowed because $v_i^\nu$ is a bounded measure. Since $\bar{\mu}$ is non-negative, one obtains
\begin{align*}
\bar{\mu}^\nu \big( [0,T] \times \R \big) \leq&~ \int_{\R^+ \times \R} \phi_\alpha d\bar{\mu} \crcr
=&~ - \int_{\R^+ \times \R} \phi_\alpha d\mu^{\nu,\mathrm{jump}}_{i} + \O(1) \int_{\R^+ \times \R} \phi_\alpha d\mu^\mathrm{IC}_\nu + \sum_{(t_k,x_k)\ \text{end point}} q_k \phi_\alpha(t_k) \\
\leq&~ \int_{\R^+ \times \R} \big[ (\phi_\alpha)_t + \tilde{\lambda}^\nu_i (\phi_\alpha)_x \big] d \big[ v^{\nu,\mathrm{jump}}_{i}(t) \big] dt + \big[ v^{\nu,\mathrm{jump}}_{i}(0) \big](\R) + \O(1) \mu^\mathrm{IC}_\nu \big( [0,T+\alpha] \times \R \big) \\
\leq&~ - \frac{1}{\alpha} \int^{T+\alpha}_T \big[ v^{\nu,\mathrm{jump}}_{i}(t) \big](\R) dt + \big[ v^{\nu,\mathrm{jump}}_{i}(0) \big](\R) + \O(1) \mu^\mathrm{IC}_\nu \big( [0,T+\alpha] \times \R \big).
\end{align*}
Letting $\alpha \searrow 0$ and since $[v^{\nu,\mathrm{jump}}_{i}(\R)](0)$ is negative, one concludes
\begin{equation*}
\bar{\mu}^\nu \big([0,T] \times \R \big) \leq - \big[ v^{\nu,\mathrm{jump}}_{i}(T) \big](\R) + \O(1) \mu^\mathrm{IC}_\nu \big( [0,T+\alpha] \times \R \big) \leq \O(1) \Upsilon^\nu(0).
\end{equation*}
We conclude this section by writing the uniform estimate
\begin{equation*}
- \O(1) \Upsilon^\nu(0) \leq \mu^{\nu,\mathrm{jump}}_{i} \leq \O(1) \mu^\mathrm{IC}_\nu.
\end{equation*}
In particular, the definitions of the measures $\mu^\nu_i$, $\mu^{\nu,\mathrm{jump}}_i$ give the following balances for the $i$-th waves across the horizontal lines:
\begin{subequations}
\label{e:balance_balance_jump}
\begin{equation}
\label{balance}
\big[ v^\nu_i(t+) \big](\R) - \big[ v^\nu_i(t-) \big](\R) = \mu^\nu_i \big( \{t\} \times \R \big),
\end{equation}
\begin{equation}
\label{balance jump}
\big[ v^{\nu,\mathrm{jump}}_i(t+) \big](\R) - \big[ v^{\nu,\mathrm{jump}}_i(t-) \big](\R) = \mu^{\nu,\mathrm{jump}}_i \big( \{t\} \times \R \big).
\end{equation}
\end{subequations}
The limits are taken in the weak topology. Notice that we can always take that $t \mapsto v^nu_i(t), v^{\nu,\mathrm{jump}}_i(t)$ is right continuous in the weak topology.

\subsubsection{\texorpdfstring{Balances of $i$-th waves in the region bounded by generalized characteristics}{Balances of waves in the region bounded by generalized characteristics}}

We recall that a \emph{minimal generalized $i$-th characteristic} is an absolutely continuous curve starting from $(t_0,x_0)$ satisfying the differential inclusion
\begin{equation*}
x^\nu(t;t_0,x_0) := \min \Big\{ x^\nu(t) : x^\nu(t_0) = x_0, \ \dot{x}^\nu(t) \in \big[ \lambda_i \big( u^\nu(t,x(t)+ \big), \lambda_i \big( u^\nu(t,x(t)-) \big) \big] \Big\}
\end{equation*}
for a.e. $t \geq t_0$. Given an interval $I=[a,b]$, we define the region $A^{\nu,(t_0,\tau)}_{[a,b]}$ bounded by the minimal $i$-th characteristics $a(t)$, $b(t)$ of $u^\nu$ starting at $(t_0,a)$ and $(t_0,b)$ by
\begin{equation*}
A^{\nu,(t_0,\tau)}_{[a,b]} := \Big\{ (t,x) : t_0< t \leq t_0 + \tau, \ a(t) \leq x \leq b(t) \Big\},
\end{equation*}
and its time-section by $I(t):=[a(t),b(t)]$. Let $J:=I_1\cup I_2 \cup \dots \cup I_M$ be the union of the disjoint closed intervals $\{I_i\}_{i=1}^M$, and set
\[
J(t) := I_1(t) \cup \dots \cup I_M(t), \quad A^{\nu,(t_0,\tau)}_J := \bigcup^M_{m=1} A^{\nu,(t_0,\tau)}_{I_m}.
\]
We will now obtain wave balances in regions of the form $A^{\nu,(t_0,\tau)}_J$. Due to the genuinely non-linearity of the $i$-th family, the corresponding proof in \cite{Lau} works, we will repeat it for completeness.

The balance on the region $A^{\nu,(t_0,\tau)}_J$ has to take into account also the contribution of the flux $\Phi^\nu_i$ across boundaries of the segments $I_m(t)$: due to the definition of generalized characteristic and the wave-front approximation, it follows that $\Phi^\nu_i$ is an atomic measure on the characteristics forming the border of $A^{\nu,(t_0,\tau)}_J$, and moreover a positive wave may enter the domain $A^{\nu,(t_0,\tau)}_J$ only if an interaction occurs at the boundary point $(\hat t, \hat x)$, which gives the estimate
\begin{equation}\label{flux control}
\Phi^{\nu}_i \big( \{ (\hat{t},\hat{x}) \} \big) \leq \O(1) \mu^\mathrm{IC}_i \big( \{ (\hat{t},\hat{x}) \} \big).
\end{equation}
One thus obtains that 
\begin{equation}
\label{e:vibalance}
\big[ v^\nu_i(\tau) \big](J(\tau)) - \big[ v^\nu_i(t_0) \big](J) = \mu^\nu_i \big( A^{\nu,(t_0,\tau)}_{J} \big) + \Phi^\nu_i \big( A^{\nu,(t_0,\tau)}_{J} \big) + \O(1) \epsilon_\nu,
\end{equation}
where the last term depends on the errors due to the wave-front approximation (a single rarefaction front may exit the interval $I_m$ at $t_0$).

The same computation can be done for the jump part $v^{\nu,\mathrm{jump}}_i$, obtaining
%
\begin{equation}
\label{e:vijumpbalance}
\big[ v^{\nu,\mathrm{jump}}_{i}(J(t)) \big](\tau) - \big[ v^{\nu,\mathrm{jump}}_{i}(t_0) \big](J) = \mu^{\nu,\mathrm{jump}}_{i} \big( A^{\nu,(t_0,\tau)}_{J} \big) + \Phi^{\nu,\mathrm{jump}}_{i} \big( A^{\nu,(t_0,\tau)}_{J} \big).
\end{equation}
Since the flux $\Phi^{\nu,\mathrm{jump}}_{i}$ only involves the contribution of $(\epsilon^0,\epsilon^1)$-shocks, it is clearly non-positive.
 
Subtracting \eqref{e:vijumpbalance} to \eqref{e:vibalance}, one finds the following equation for $v^{\nu,\mathrm{cont}}_{i}$:
\begin{equation*}
\big[ v^{\nu,\mathrm{cont}}_{i}(\tau) \big](J(\tau)) - \big[ v^{\nu,\mathrm{cont}}_{i}(t_0) \big](J) = \big( \mu^\nu_i - \mu^{\nu,\mathrm{jump}}_{i} \big) \big( A^{\nu,\tau}_J \big) + \big( \Phi^{\nu}_{i} - \Phi^{\nu,\mathrm{jump}}_{i} \big) \big( A^{\nu,(t_0,\tau)}_J \big) + \O(1) \epsilon_\nu.
\end{equation*}

Denote the difference between the two fluxes by
\[
\Phi^{\nu,\mathrm{cont}}_{i} := \Phi^{\nu}_{i} - \Phi^{\nu,\mathrm{jump}}_{i}.
\]
Since $\Phi^{\nu,\mathrm{jump}}_{i}$ removes only some terms in the negative part of $\Phi^{\nu}_{i}$, one concludes that
\begin{equation}
\label{estimate for phicont}
\Phi^{\nu}_{i} - \Phi^{\nu,\mathrm{jump}}_{i} \leq \big[ \Phi^{\nu}_{i} \big]^+ \leq \mu^\mathrm{IC}_\nu.
\end{equation}
Setting
\[
\mu^\mathrm{ICJ}_{i,\nu} := \mu^\mathrm{IC}_\nu + \big| \mu^{\nu,\mathrm{jump}}_{i} \big|,
\]
and using the estimate $|\mu^\nu_i| \leq \O(1) \mu^\mathrm{IC}_\nu$, one has
\begin{equation}
\label{estimate for muicont}
\mu_i^\nu - \mu_{i}^{\nu,\mathrm{jump}} \leq \O(1) \mu^\mathrm{ICJ}_{i,\nu}.
\end{equation}

\subsubsection{Decay estimate}

Due to the semigroup property of solutions, it is sufficient to prove the estimate for the measure $[v^{\nu,\mathrm{cont}}_{i} (t=0)]^-$. Consider thus a closed interval $I = [a,b]$ and let $z(t) := a(t)-b(t)$
where
\begin{equation*}
a(t):= x^\nu(t;0,a), \quad b(t):= x^\nu(t;0,b)
\end{equation*}
and the minimal forward characteristics stating at $t=0$ from $a$ and $b$. For $\mathcal L^1$-a.e. $t$ one has
\begin{equation*}
\dot{z}(t) = \tilde{\lambda}_i(t,b(t)) - \tilde{\lambda}_i(t,a(t)).
\end{equation*}
By introducing a piecewise Lipschitz continuous non-decreasing potential $\Phi$ to control the waves on the other families \cite{Bre}, with $\Phi(0) = 1$, one obtain
\begin{equation}\label{estimate of zdot}
\Big| \dot{z}(t) + \xi(t) - \big[ v^\nu_i(t) \big](I(t)) \Big| \leq \O(1) \epsilon_\nu + \dot{\Phi}(t) z(t),
\end{equation}
where
\begin{equation*}
\xi(t) := \big( \tilde{\lambda}_i(t,a(t)) - {\lambda}_i(t,a(t)-) \big) + \big( \tilde{\lambda}_i(t,b(t)+) - {\lambda}_i(t,b(t)) \big). 
\end{equation*}
%

We consider two cases.

\noindent \emph{Case 1.} If
\[
\dot{z}(t) - \dot{\Phi}(t) z(t) < \frac{1}{4} \big[ v^\nu_i(0) \big](I)
\]
for all $t>0$, then
\begin{equation*}
\frac{d}{dt} \Big[ e^{-\int^t_0 \dot{\Phi}(\tau) d\tau} z(t) \Big] = e^{-\int^t_0\dot{\Phi}(\tau)d\tau} \big\{ \dot{z}(t) - \dot{\Phi}(t) z(t) \big\} < \frac{e^{-\int^t_0\dot{\Phi}(\tau)d\tau}}{4} \big[ v^\nu_i(0) \big](I) \leq \O(1) \big[ v^\nu_i(0) \big](I).
\end{equation*}
Integrating the above inequality from $0$ to $\tau$ and remembering that $\Phi(0) = 1$, one has
\begin{equation*}
-\L^1(I) = z(0) \leq e^{-\int^\tau_0\dot{\Phi}(\tau) d\tau} z(\tau) - z(0) \leq \O(1) \tau \big[ v^\nu_i(I) \big] \leq \O(1) \tau \big[ v^{\nu,\mathrm{cont}}_i(0) \big](I),
\end{equation*}
which reads as
\begin{equation*}
- \big[ v^{\nu,\mathrm{cont}}_i(0) \big](I) \leq \O(1) \frac{\L^1(I)}{\tau}.
\end{equation*}

\noindent \emph{Case 2.} Assume instead that
\[
\dot{z}(t) - \dot{\Phi}(t)z(t) \geq  \frac{1}{4} \big[ v^\nu_i(0) \big](I)
\]
at some time $t > 0$. By the inequality \eqref{estimate of zdot} and the balance \eqref{e:vibalance} one obtains
\begin{equation}
\label{e:case_2}
\begin{split}
\frac{1}{4} \big[ v^\nu_i(0) \big](I) \leq&~ \dot{z}(t) - \dot{\Phi}(t)z(t) \crcr
\leq&~ \big[ v^\nu_i(t) \big](I(t)) - \xi(t) + \O(1) \epsilon_\nu \crcr
\leq&~ \big[ v^\nu_i(0) \big](I) + \mu^\nu_i \big( A^{\nu,(0,t)}_{[a,b]} \big) + \Phi^{\nu}_{i} \big( A^{\nu,(0,t)}_{[a,b]} \big) - \xi(t) + \O(1)\epsilon_\nu.
\end{split}
\end{equation}
Hence
\begin{equation}\label{estimate of vi1}
\big[ v^\nu_i(0) \big](I) \geq -\frac{4}{3} \bigg[ \mu^\nu_i \big( A^{\nu,(0,t)}_{[a,b]} \big) + \Phi^{\nu}_{i} \big( A^{\nu,(0,t)}_{[a,b]} \big) - \xi(t) + \O(1)\epsilon_\nu \bigg].
\end{equation}
From \eqref{d:vnujump} and the fact that the fronts in $\T^{\nu,i}_{(\epsilon^0,\epsilon^1)}$ satisfy Rankine-Hugoniot conditions (up to a negligible error), we have
\[
v^{\nu,\mathrm{jump}}_i(t,a(t)) = \lambda_i(t,a(t)+) - \lambda_i(t,a(t)-),
\]
and since
\[
\bigg| \tilde \lambda_i(u^\mathrm L,u^\mathrm R) - \frac{\lambda_i(u^\mathrm L) + \lambda_i(u^\mathrm L)}{2} \bigg| \leq \O(1) \big| u^\mathrm L - u^\mathrm R \big|^2,
\]
by the balance \eqref{e:vijumpbalance}, we conclude that
\begin{equation}
\label{estimate of vi}
\begin{split}
\xi(t) \geq&~ \frac{3}{4} \Big[ \big[ v^{\nu,\mathrm{jump}}_{i}(t) \big]({a(t)}) + \big[ v^{\nu,\mathrm{jump}}_{i}(t) \big]({b(t)}) - 2 \epsilon^1 \Big] \\
\geq&~ \frac{3}{4} \Big[ \big[ v^{\nu,\mathrm{jump}}_{i}(t) \big](I(t)) - 2 \epsilon^1 \Big] \\
\geq&~ \frac{3}{4} \bigg[ \big[ v^{\nu,\mathrm{jump}}_{i}(0) \big](I) + \mu^{\nu,\mathrm{jump}}_{i} \big( A^{\nu,(0,t)}_{[a,b]} \big) + \Phi^{\nu,\mathrm{jump}}_{i} \big( A^{\nu,(0,t)}_{[a,b]} \big) - 2 \epsilon^1 \bigg].
\end{split}
\end{equation}

Substituting \eqref{estimate of vi} into \eqref{estimate of vi1}, using the estimates \eqref{estimate for phicont}, \eqref{estimate for muicont} and integrating \eqref{e:case_2}, we obtain
\begin{equation*}
- \big[ v^{\nu,\mathrm{cont}}_{i}(0) \big](I) \leq \O(1) \bigg\{ \frac{\mathcal{L}^1(I)}{t}+\mu^\mathrm{ICJ}_\nu \big( \overline{A^{\nu,(0,t)}_U} \big) + \epsilon^1 + \epsilon_\nu \bigg\}.
\end{equation*}
This gives the estimate \eqref{e:me} for the case of a single interval for the approximate solution.

By repeating the analysis in the case of a finite union of intervals, one obtains the same bound as above, and since $v^{\nu,\mathrm{cont}}_{i}$ is a Radon measure, the same result holds for any Borel sets, i.e.
\begin{equation*}
- \big[ v^{\nu,\mathrm{cont}}_{i}(0) \big](B) \leq \O(1) \bigg\{ \frac{\mathcal{L}^1(B)}{t} + \mu^\mathrm{ICJ}_\nu \big( \overline{A^{\nu,(0,t)}_B} \big) + \epsilon^1 + \epsilon_\nu \bigg\},
\end{equation*}
where $B$ is any Borel in $\R$ and
\[
A^{\nu,(0,t)}_B := \Big\{ \big( \tau,x^\nu(\tau;0,x_0) \big) : \ x \in B, \ 0 < \tau \leq t \Big\}.
\]

As the solution is independent on the choice of the approximation, we can consider a particular converging sequence $\{u^\nu\}_{\nu\geq 1}$ of $\epsilon_\nu$-approximate solutions with the following additional properties:
\begin{equation}
\Q(u^\nu(0,\cdot))\rightarrow\Q(u_0).
\end{equation}

By lower semi-continuity of  $[v_i(0)]^-+C_0\Q(u(0))$ \eqref{e:lsc_glimm}, one gets
\begin{equation}\label{lowsemi}
[v_i(0)]^-+C_0\Q(u(0))\leq \mathrm{weak}^*-\liminf_{\nu \to \infty}\left\{[v_i^\nu(0)]^-+C_0\Q(u^\nu(0))\right\}.
\end{equation}

From \eqref{lowsemi} and \eqref{converge of vijump}, up to a subsequence, one obtains for any open set $B \subset \R$,
\begin{equation}
\begin{split}
\big[ v^\mathrm{cont}_i(0) \big]^-(U) =&~ [v_i(0)]^-(U) + \big[ v^\mathrm{jump}_i(0) \big](U) \\
\leq&~ 
\liminf_{\nu \rightarrow \infty} \big\{ \big[ v^\nu_i(0) \big]^-(U) + C_0 \Q(u^\nu(0)) \big\} - C_0 \Q(u(0)) 
+ \lim_{\nu \rightarrow \infty} \big[ v^{\nu,\mathrm{jump}}_{i}(0) \big](U) \\
=&~ 
\liminf_{\nu \rightarrow \infty} \big\{ \big[ v^{\nu,\mathrm{cont}}_{i}(0) \big]^-(U) + C_0 \Q(u^\nu(0)) \big\} - C_0 \Q(u(0)) \\
\leq&~ \liminf_{\nu \rightarrow \infty} \mathcal{O}(1) \bigg\{ \frac{\mathcal{L}^1(U)}{t} + \mu^{\nu,\mathrm{ICJ}}_i \big( \overline{A^{\nu,(0,t)}_U} \big) + \epsilon^1 + \epsilon_\nu + \Q(u^\nu(0)) - \Q(u(0)) \bigg\}\\
\leq&~ \mathcal{O}(1) \bigg\{ \frac{\mathcal{L}^1(U)}{t} + \mu^\mathrm{ICJ}_i \big( [0,t] \times \R \big) \bigg\},
\end{split}
\end{equation}
where $\mu^\mathrm{ICJ}_i$ is defined as weak$^*$-limit of measure $\mu^{\nu,\mathrm{ICJ}}_{i}$ (up to a subsequence). Then the outer regularity of Radon measure yields the inequality for any Borel set.


The above estimate together with Theorem \ref{t:bde} gives \eqref{e:me}.

\section{\texorpdfstring{SBV regularity for the $i$-th component of the $i$-th eigenvalue}{SBV regularity for the i-th component of the i-th eigenvalue}}
\label{s:sii}

This last section concerns the proof of Theorem \ref{t2}, adapting the strategy of Section \ref{s:scalar}.

\begin{proof}[Proof of Theorem \ref{t2}]
As in the scalar case, we define the sets
\begin{align*}
J_\tau :=&~ \big\{ x \in \R : \ u^\mathrm{L}(\tau,x) \neq u^\mathrm{R}(\tau,x) \big\}, \\
F_\tau :=&~ \big\{ x \in \R : \ \nabla\lambda_i(u(\tau,x)) \cdot r_i(u(\tau,x)) = 0 \big\}, \\
C :=&~ \big\{ (\tau,\xi) \in \R^+ \times \R : \ \xi \in J_\tau\cup F_\tau \big\}, \quad C_\tau := J_\tau \cup F_\tau.
\end{align*}
By definition of continuous part
\[
\big| v^\mathrm{cont}_i(J_\tau) \big|(\tau) = 0,
\]
and since
\[
\nabla\lambda_i \big( u(\tau,F_\tau \setminus J_\tau) \big) \cdot r_i \big( u(\tau,F_\tau \setminus J_\tau) \big) = 0,
\]
we conclude that
\begin{equation*}
\begin{split}
\big| \nabla \lambda_i(u) \cdot r_i(u) v^\mathrm{cont}_i(\tau)|(C_\tau) = \big| \nabla\lambda_i(u) \cdot r_i(u) v^\mathrm{cont}_i(\tau) \big| (J_\tau) + \big| \nabla\lambda_i(u) \cdot r_i(u) v^\mathrm{cont}_i(\tau) \big|(F_\tau\setminus J_\tau) = 0.
\end{split}
\end{equation*}

For any $(t_0,x_0) \in \R^+ \times \R \setminus C$, there exist strictly positive $b_0 = b_0(x_0,t_0)$, $c_0 = c_0(x_0,t_0)$ such that
\begin{equation*}
\big| \nabla \lambda_i \cdot r_i(u(t_0,x)) \big| \geq c_0 > 0
\end{equation*}
for every $x$ in the open interval $I_0 := ]-b_0+x_0,x_0+b_0[$, because $u(t_0,x)$ is continuous at $x_0 \notin C_{t_0}$. Hence by Theorem \ref{t:to}, we know the there is a triangle
\[
T_0 := \Big\{ (t,x) :\ |x-x_0| < b'_0 - \bar{\eta}(t-t_0) , \ 0<t-t_0<b'_0/ \bar{\eta} \Big\}
\]
with the basis $I'_0 := ]-b'_0+x_0,x_0+b'_0[ \subset I_0$, such that
\begin{equation}
\label{e:uni_conv}
\big| \nabla \lambda_i \cdot r_i(u(t_0,x)) \big| \geq \frac{c_0}{2} > 0,
\end{equation}
by taking $b'_0 \ll 1$ in order to have that the total variation remains sufficiently small.

Since $u \llcorner_{T_0}$ coincides with the solution to
\begin{equation}
\label{e:local}
\left\{
\begin{array}{l}
\partial_t w + f(w)_x = 0,  \\
w(x,t_0)=\begin{cases}
u_{t_0}(x) & |x-x_0| < b'_0, \\
\frac{1}{2b_0'} \int^{x_0+b_0'}_{x_0-b_0'} u_{t_0}(y)dy & |x-x_0| \geq b_0,
\end{cases}
\end{array} \right.
\end{equation}
and by taking $b_0'$ sufficiently small, we still have that \eqref{e:uni_conv} holds for the range of $w$. In particular $w$ is SBV outside a countable number of times, and the same happens for $u$ in $T_0$.

%

As in the scalar case, one thus verifies that there is a countable family of triangles $\{T_i\}_{i=1}^\infty$ covering the complement of $C$ outside a set whose projection on the $t$-axis is countable. The same computation of the scalar case concludes the proof: for any $\tau$ chosen as in \eqref{e:final_SBV}
%
\begin{eqnarray*}
\big| D^\mathrm{c}\lambda_i \cdot r_i \big|(\R) \leq \big| D^\mathrm{c} \lambda_i \cdot r_i \big| (C_\tau) + \big|D^\mathrm{c} \lambda_i \cdot r_i \big| \bigg( \bigcup_i T_i\cap\{t=\tau\} \bigg) = 0.
\end{eqnarray*}
\end{proof}

Similar to the scalar case, it is easy to get the following corollary from the Theorem \ref{t2}.

\begin{corollary}
Let $u$ be the vanishing viscosity solution of the problem \eqref{e:basic}, \eqref{e:initial}. Then the measure $(D_u \lambda_i(u) \cdot r_i(u)) (l_i(u) \cdot u_x)$ has no Cantor part in $\R^+ \times \R$.
\end{corollary}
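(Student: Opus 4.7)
The plan is to follow the same blueprint as the scalar corollary in Section \ref{s:scalar}. Set $\mu_i := (D_u\lambda_i(u)\cdot r_i(u))(l_i(u)\cdot u_x)$; since the factor $D_u\lambda_i(u)\cdot r_i(u)$ is a bounded Borel function and $l_i(u)\cdot u_x=v_i$ is the $i$-th wave measure on $\R^+\times\R$, the 2D Cantor part of $\mu_i$ decomposes as $(D_u\lambda_i\cdot r_i)\,v_i^\mathrm{c}$, so it suffices to show that this measure vanishes both in the $x$- and in the $t$-direction.

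First, I would invoke Theorem \ref{t2}: for every $\tau$ outside a countable set $S\subset\R^+$ the one-dimensional $x$-slice $\mu_i(\tau,\cdot)$ has no Cantor part. By the slicing theorem for BV functions (Theorems 3.107--3.108 of \cite{AFP}) applied to $\lambda_i(u)\in\mathrm{BV}(\R^+\times\R)$, the $x$-direction Cantor part of the associated 2D measure equals the integral in $t$ of the 1D Cantor parts of its slices; since $S$ is $\L^1$-negligible this integral vanishes, and hence the Cantor part of $\mu_i$ in the $x$-direction is zero.

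To handle the $t$-direction I would combine Volpert's chain rule with the system \eqref{e:basic3}, which at the level of Cantor parts gives $D^\mathrm{c}_t u=-A(u)\,D^\mathrm{c}_x u$. Testing against $l_i(u)$ and using the left-eigenvector identity $l_i(u)\cdot A(u)=\lambda_i(u)\,l_i(u)$ yields
\[
l_i(u)\cdot D^\mathrm{c}_t u=-\lambda_i(u)\,\bigl(l_i(u)\cdot D^\mathrm{c}_x u\bigr).
\]
Multiplying by the continuous scalar $D_u\lambda_i(u)\cdot r_i(u)$ shows that the $t$-counterpart of $\mu_i$ equals $-\lambda_i(u)$ times $\mu_i$ on the Cantor part; since the latter is already zero by the previous step, the $t$-direction Cantor part of $\mu_i$ vanishes as well, and together with the $x$-case this gives that $\mu_i$ has no Cantor part in $\R^+\times\R$.

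The step I expect to be the subtlest is the slicing argument: $\mu_i$ is not literally the distributional derivative of a single BV function but the product of a continuous factor with the wave measure $v_i$, so one must justify applying the slicing theorem to $v_i$ as a BV-type measure on $\R^+\times\R$ and then pulling the continuous factor $D_u\lambda_i\cdot r_i$ through the Lebesgue decomposition. Once this is in place, the passage from $x$- to $t$-Cantor part is routine by Volpert's rule together with the left-eigenvector identity $l_iA=\lambda_i l_i$.
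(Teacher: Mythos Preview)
Your approach via slicing and Theorem~\ref{t2} is precisely what the paper intends (it only says ``similar to the scalar case''), but two points deserve correction.

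First, the entire $t$-direction step is unnecessary. The measure $\mu_i=(\nabla\lambda_i\cdot r_i)(l_i\cdot u_x)$ is a \emph{scalar} measure on $\R^+\times\R$ built from $D_xu$ alone; it has no $t$-component. The scalar corollary required both steps because the claim there was $f'(u)\in\mathrm{SBV}_{\mathrm{loc}}(\R^+\times\R)$, i.e.\ both $D_x f'(u)$ and $D_t f'(u)$ must be Cantor-free. Here the statement concerns only the $x$-derivative, so once you show $\mu_i^{\mathrm c}=0$ via slicing you are finished. Your identity $l_i\cdot D_t^{\mathrm c}u=-\lambda_i\,(l_i\cdot D_x^{\mathrm c}u)$ is correct but irrelevant.

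Second, the slicing theorem should be applied to $u$ itself rather than to $\lambda_i(u)$. Slicing $\lambda_i(u)$ yields information about $D_x^{\mathrm c}\lambda_i(u)=\sum_k(\nabla\lambda_i\cdot r_k)\,v_k^{\mathrm c}$, which is the full sum over all families and not $\mu_i^{\mathrm c}=(\nabla\lambda_i\cdot r_i)\,v_i^{\mathrm c}$. The clean route is: apply Theorems~3.107--3.108 of \cite{AFP} to $u\in\mathrm{BV}(\R^+\times\R;\R^N)$ to get $D_x^{\mathrm c}u=\int\mathrm d t\otimes D^{\mathrm c}u(t,\cdot)$, then multiply by the bounded Borel function $(\nabla\lambda_i\cdot r_i)\,l_i$ to obtain
\[
\mu_i^{\mathrm c}=\int\mathrm d t\otimes\big(\nabla\lambda_i(u(t,\cdot))\cdot r_i(u(t,\cdot))\big)\,v_i^{\mathrm c}(t).
\]
Theorem~\ref{t2} kills the integrand for all $t\notin S$; since $S$ is countable the integral vanishes. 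You flagged exactly this issue in your final paragraph, and this is the fix --- no auxiliary ``BV-type'' theory for $v_i$ is needed, just slicing of $u$ and multiplication by a bounded density.
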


\begin{remark}
\label{ex:nonsbv}
Consider the following equations
\begin{equation*}
\begin{cases}
u_t=0, \\
v_t+((1 + v + u) v )_x=0.
 \end{cases}
\end{equation*}
Since $D_x \lambda_2((u,v))= D_x u+ 2 D_x v$, then it is clear that $D_x \lambda_2$ can have a Cantor part, while from Theorem \ref{t:me} the second component
\[
\big( D_u \lambda_2 \cdot r_2 \big) \big( l_2 \cdot (u,v)_x \big) = \frac{2}{1+u+2v} \big( v u_x + (1+u+2v) v_x \big)
\]
has not Cantor part.
\end{remark}

\bibliographystyle{plain}

\end{document}